\def \u {\mathop{\rm \mathcal{U}}\nolimits}
\def \tr {\mathop{\rm tr}\nolimits}
\def \re {\mathop{\rm Re}\nolimits}
\def \eig {\mathop{\rm eig}\nolimits}
\def \Vol {\mathop{\rm Vol}\nolimits}
\def \etr {\mathop{\rm etr}\nolimits}
\def \diag {\mathop{\rm diag}\nolimits}
\renewenvironment{abstract}
                 {\vspace{6pt}
                  \begin{center}
                  \begin{minipage}{5in}
                  \centerline{\textbf{Abstract}}
                  \noindent\ignorespaces
                 }
                 {\end{minipage}\end{center}}
\newtheorem{theorem}{\textbf{Theorem}}[section]
\newtheorem{proposition}{\textbf{Proposition}}[section]
\theoremstyle{definition}
\newtheorem{definition}{\textbf{Definition}}[section]
\newtheorem{remark}{\textbf{Remark}}[section]
\title{\Large \textbf{Generalised matrix multivariate $T$-distribution}}
\author{
  \textbf{Jos\'e A. D\'{\i}az-Garc\'{\i}a} \thanks{Corresponding author\newline
   {\bf Key words.}  Matrix multivariate; $T$-distribution; Riesz distribution; Kotz-Riesz distribution; real, complex,
   quaternion and octonion random  matrices; real normed division algebras.\newline
    2000 Mathematical Subject Classification. 15A23; 15B33; 15A09; 15B52; 60E05}\\
  {\normalsize Department of Statistics and Computation} \\
  {\normalsize 25350 Buenavista, Saltillo, Coahuila, Mexico} \\
  {\normalsize E-mail: jadiaz@uaaan.mx} \\[2ex]
  \textbf{Ram\'on Guti\'errez-S\'anchez} \\
  {\normalsize Department of Statistics and O.R} \\
  {\normalsize University of Granada} \\
  {\normalsize Granada 18071, Spain}\\
  {\normalsize E-mail: ramongs@ugr.es}\\
}
\date{}
\begin{document}
\maketitle

\begin{abstract}
Supposing Kotz-Riesz type I and II distributions and their corresponding independent univariate
Riesz distributions the associated generalised matrix multivariate $T$ distributions, termed
matrix multivariate $T$-Riesz distributions are obtained. In addition, its various properties
are studied. All these results are obtained for real normed division algebras.
\end{abstract}

\section{Introduction}\label{sec1}

In many statistical models, as an alternative to the use of matrix multivariate normal distribution from
the 80's  it has been assumed a matrix multivariate elliptical distribution. Actually, the matrix
multivariate elliptical distribution is a family of distributions that includes the matrix multivariate
normal, contaminated normal, Pearson type II and VII, Kotz, Jensen-Logistic, power exponential and Bessel
distributions, among others. These distributions have tails that are more or less weighted, and/or
display a greater or smaller degree of kurtosis than the normal distribution, refer to \citet{fz:90} and
\citet{gv:93}.

In addition, matrix multivariate elliptical distributions are of great interest due to the next
invariance property: Assume that $\mathbf{X}$ is distributed according to a matrix multivariate
distribution, then the distributions of certain type of matrix transformations of the random matrix, say
$\mathbf{Y} = f(\mathbf{X})$, are invariant under all class of matrix multivariate elliptical
distribution, furthermore, such distributions coincide when $\mathbf{X}$ is normally assumed, see
\citet{fz:90} and \citet{gv:93}.

However, this invariance property is present when certain statistical (probabilistic) dependence is
assumed. For example, if $\mathbf{X} = \left[
\begin{array}{c}
  \mathbf{X}_{1} \\
  \mathbf{X}_{2}
\end{array}
\right]$ has a matrix multivariate elliptical distribution, then $\mathbf{X}_{1}$ and $\mathbf{X}_{2}$
are statistically dependent, observing that $\mathbf{X}_{1}$ and $\mathbf{X}_{2}$ are probabilistically
independent if $\mathbf{X}$ has a matrix multivariate normal distribution, \citet{gv:93}. Then, if is
defined $\mathbf{T} = \mathbf{X}_{1}(\mathbf{X}'_{2}\mathbf{X}_{2})^{-1/2}$, where $\mathbf{X}'$ denotes
the transpose of $\mathbf{X}$, it is said that $\mathbf{T}$ has a matrix multivariate $T$-distribution,
and its distribution is the same under all matrix multivariate elliptical distribution and this coincides
with the distribution obtained when $\mathbf{X}$ follow a matrix multivariate normal distribution.

The independent case cited above can be found in the Bayesian inference, see \citet{p:82}. In particular,
assume that certain distribution is function of two matrix parameters, say $\boldsymbol{\delta}_{1}$,
$\boldsymbol{\delta}_{2}$ for which, it is suppose that their prior distributions  belong to the class of
matrix variate elliptical distribution and are independent. Then, is of interest find the prior
distribution of a parameter type $\mathbf{T}$ defined as
$\boldsymbol{\delta}_{1}(\boldsymbol{\delta}'_{2}\boldsymbol{\delta}_{2})^{-1/2}$. In this case the
distribution of $\mathbf{T}$ is different for each particular elliptical distribution.

A distribution of particular interest is the matrix multivariate elliptical distribution termed
Kotz-Riesz distribution. This interest is based in the relation with the Riesz distribution,
\citet{dg:15c}. If $\mathbf{X}$ is distributed according a matrix multivariate Kotz-Riesz, then the
matrix $\mathbf{V} = \mathbf{X}'\mathbf{X}$ has a Riesz distribution. The Riesz distributions, was first
introduced by \citet{hl:01} under the name of Riesz natural exponential family (Riesz NEF); it was based
on a special case of the so-called Riesz measure from \citet[p.137]{fk:94}. This Riesz distribution
generalises the matrix multivariate gamma and Wishart distributions, containing them as particular cases.

In analogy with the case of $T$-distribution under normality , exist two possible generalisations of it
when a Kotz-Riesz distribution is assumed, see \citet{dggj:12}. In this paper is addressed the case of
the distribution termed matrix multivariate $T$-Riesz distribution.

This present article is organised as follow; some basic concepts and the notation of abstract algebra and
Jacobians are summarised in Section \ref{sec2}. The nonsingular central matrix multivariate $T$-Riesz
type I and II distributions and the corresponding generalised matrix multivariate beta type II
distributions  are studied in Section \ref{sec3}. Finally, the joint densities of the singular values are
derived in Section \ref{sec4}. All these results are derived for real normed division algebras.

\section{Preliminary results}\label{sec2}

A detailed discussion of real normed division algebras can be found in \citet{b:02} and \citet{E:90}. For
your convenience, we shall introduce some notation, although in general, we adhere to standard notation
forms.

For our purposes: Let $\mathbb{F}$ be a field. An \emph{algebra} $\mathfrak{A}$ over $\mathbb{F}$ is a
pair $(\mathfrak{A};m)$, where $\mathfrak{A}$ is a \emph{finite-dimensional vector space} over
$\mathbb{F}$ and \emph{multiplication} $m : \mathfrak{A} \times \mathfrak{A} \rightarrow A$ is an
$\mathbb{F}$-bilinear map; that is, for all $\lambda \in \mathbb{F},$ $x, y, z \in \mathfrak{A}$,
\begin{eqnarray*}
% \nonumber to remove numbering (before each equation)
  m(x, \lambda y + z) &=& \lambda m(x; y) + m(x; z) \\
  m(\lambda x + y; z) &=& \lambda m(x; z) + m(y; z).
\end{eqnarray*}
Two algebras $(\mathfrak{A};m)$ and $(\mathfrak{E}; n)$ over $\mathbb{F}$ are said to be
\emph{isomorphic} if there is an invertible map $\phi: \mathfrak{A} \rightarrow \mathfrak{E}$ such that
for all $x, y \in \mathfrak{A}$,
$$
  \phi(m(x, y)) = n(\phi(x), \phi(y)).
$$
By simplicity, we write $m(x; y) = xy$ for all $x, y \in \mathfrak{A}$.

Let $\mathfrak{A}$ be an algebra over $\mathbb{F}$. Then $\mathfrak{A}$ is said to be
\begin{enumerate}
  \item \emph{alternative} if $x(xy) = (xx)y$ and $x(yy) = (xy)y$ for all $x, y \in \mathfrak{A}$,
  \item \emph{associative} if $x(yz) = (xy)z$ for all $x, y, z \in \mathfrak{A}$,
  \item \emph{commutative} if $xy = yx$ for all $x, y \in \mathfrak{A}$, and
  \item \emph{unital} if there is a $1 \in \mathfrak{A}$ such that $x1 = x = 1x$ for all $x \in \mathfrak{A}$.
\end{enumerate}
If $\mathfrak{A}$ is unital, then the identity 1 is uniquely determined.

An algebra $\mathfrak{A}$ over $\mathbb{F}$ is said to be a \emph{division algebra} if $\mathfrak{A}$ is
nonzero and $xy = 0_{\mathfrak{A}} \Rightarrow x = 0_{\mathfrak{A}}$ or $y = 0_{\mathfrak{A}}$ for all
$x, y \in \mathfrak{A}$.

The term ``division algebra", comes from the following proposition, which shows that, in such an algebra,
left and right division can be unambiguously performed.

Let $\mathfrak{A}$ be an algebra over $\mathbb{F}$. Then $\mathfrak{A}$ is a division algebra if, and
only if, $\mathfrak{A}$ is nonzero and for all $a, b \in \mathfrak{A}$, with $b \neq 0_{\mathfrak{A}}$,
the equations $bx = a$ and $yb = a$ have unique solutions $x, y \in \mathfrak{A}$.

In the sequel we assume $\mathbb{F} = \Re$ and consider classes of division algebras over $\Re$ or
``\emph{real division algebras}" for short.

We introduce the algebras of \emph{real numbers} $\Re$, \emph{complex numbers} $\mathfrak{C}$,
\emph{quaternions} $\mathfrak{H}$ and \emph{octonions} $\mathfrak{O}$. Then, if $\mathfrak{A}$ is an
alternative real division algebra, then $\mathfrak{A}$ is isomorphic to $\Re$, $\mathfrak{C}$,
$\mathfrak{H}$ or $\mathfrak{O}$.

Let $\mathfrak{A}$ be a real division algebra with identity $1$. Then $\mathfrak{A}$ is said to be
\emph{normed} if there is an inner product $(\cdot, \cdot)$ on $\mathfrak{A}$ such that
$$
  (xy, xy) = (x, x)(y, y) \qquad \mbox{for all } x, y \in \mathfrak{A}.
$$
If $\mathfrak{A}$ is a \emph{real normed division algebra}, then $\mathfrak{A}$ is isomorphic $\Re$,
$\mathfrak{C}$, $\mathfrak{H}$ or $\mathfrak{O}$.

There are exactly four normed division algebras: real numbers ($\Re$), complex numbers ($\mathfrak{C}$),
quaternions ($\mathfrak{H}$) and octonions ($\mathfrak{O}$), see \citet{b:02}. We take into account that
should be taken into account, $\Re$, $\mathfrak{C}$, $\mathfrak{H}$ and $\mathfrak{O}$ are the only
normed division algebras; furthermore, they are the only alternative division algebras.

Let $\mathfrak{A}$ be a division algebra over the real numbers. Then $\mathfrak{A}$ has dimension either
1, 2, 4 or 8. In other branches of mathematics, the parameters $\alpha = 2/\beta$ and $t = \beta/4$ are
used, see \citet{er:05} and \citet{k:84}, respectively.

Finally, observe that

\begin{tabular}{c}
  $\Re$ is a real commutative associative normed division algebras, \\
  $\mathfrak{C}$ is a commutative associative normed division algebras,\\
  $\mathfrak{H}$ is an associative normed division algebras, \\
  $\mathfrak{O}$ is an alternative normed division algebras. \\
\end{tabular}

Let $\mathfrak{L}^{\beta}_{n,m}$ be the set of all $n \times m$ matrices of rank $m \leq n$ over
$\mathfrak{A}$ with $m$ distinct positive singular values, where $\mathfrak{A}$ denotes a \emph{real
finite-dimensional normed division algebra}. Let $\mathfrak{A}^{n \times m}$ be the set of all $n \times
m$ matrices over $\mathfrak{A}$. The dimension of $\mathfrak{A}^{n \times m}$ over $\Re$ is $\beta mn$.
Let $\mathbf{A} \in \mathfrak{A}^{n \times m}$, then $\mathbf{A}^{*} = \bar{\mathbf{A}}^{T}$ denotes the
usual conjugate transpose.

Table \ref{table1} sets out the equivalence between the same concepts in the four normed division
algebras.

\begin{table}[th]
  \centering
  \caption{\scriptsize Notation}\label{table1}
  \begin{scriptsize}
  \begin{tabular}{cccc|c}
    \hline
    % after \\: \hline or \cline{col1-col2} \cline{col3-col4} ...
    Real & Complex & Quaternion & Octonion & \begin{tabular}{c}
                                               Generic \\
                                               notation \\
                                             \end{tabular}\\
    \hline
    Semi-orthogonal & Semi-unitary & Semi-symplectic & \begin{tabular}{c}
                                                         Semi-exceptional \\
                                                         type \\
                                                       \end{tabular}
      & $\mathcal{V}_{m,n}^{\beta}$ \\
    Orthogonal & Unitary & Symplectic & \begin{tabular}{c}
                                                         Exceptional \\
                                                         type \\
                                                       \end{tabular} & $\mathfrak{U}^{\beta}(m)$ \\
    Symmetric & Hermitian & \begin{tabular}{c}
                              % after \\: \hline or \cline{col1-col2} \cline{col3-col4} ...
                              Quaternion \\
                              hermitian \\
                            \end{tabular}
     & \begin{tabular}{c}
                              % after \\: \hline or \cline{col1-col2} \cline{col3-col4} ...
                              Octonion \\
                              hermitian \\
                            \end{tabular} & $\mathfrak{S}_{m}^{\beta}$ \\
    \hline
  \end{tabular}
  \end{scriptsize}
\end{table}

We denote by ${\mathfrak S}_{m}^{\beta}$ the real vector space of all $\mathbf{S} \in \mathfrak{A}^{m
\times m}$ such that $\mathbf{S} = \mathbf{S}^{*}$. In addition, let $\mathfrak{P}_{m}^{\beta}$ be the
\emph{cone of positive definite matrices} $\mathbf{S} \in \mathfrak{A}^{m \times m}$. Thus,
$\mathfrak{P}_{m}^{\beta}$ consist of all matrices $\mathbf{S} = \mathbf{X}^{*}\mathbf{X}$, with
$\mathbf{X} \in \mathfrak{L}^{\beta}_{n,m}$; then $\mathfrak{P}_{m}^{\beta}$ is an open subset of
${\mathfrak S}_{m}^{\beta}$.

Let $\mathfrak{D}_{m}^{\beta}$ consisting of all $\mathbf{D} \in \mathfrak{A}^{m \times m}$, $\mathbf{D}
= \diag(d_{1}, \dots,d_{m})$. Let $\mathfrak{T}_{U}^{\beta}(m)$ be the subgroup of all \emph{upper
triangular} matrices $\mathbf{T} \in \mathfrak{A}^{m \times m}$ such that $t_{ij} = 0$ for $1 < i < j
\leq m$.

For any matrix $\mathbf{X} \in \mathfrak{A}^{n \times m}$, $d\mathbf{X}$ denotes the\emph{ matrix of
differentials} $(dx_{ij})$. Finally, we define the \emph{measure} or volume element $(d\mathbf{X})$ when
$\mathbf{X} \in \mathfrak{A}^{n \times m}, \mathfrak{S}_{m}^{\beta}$, $\mathfrak{D}_{m}^{\beta}$ or
$\mathcal{V}_{m,n}^{\beta}$, see \citet{dggj:11} and \citet{dggj:13}.

If $\mathbf{X} \in \mathfrak{A}^{n \times m}$ then $(d\mathbf{X})$ (the Lebesgue measure in
$\mathfrak{A}^{n \times m}$) denotes the exterior product of the $\beta mn$ functionally independent
variables
$$
  (d\mathbf{X}) = \bigwedge_{i = 1}^{n}\bigwedge_{j = 1}^{m}dx_{ij} \quad \mbox{ where }
    \quad dx_{ij} = \bigwedge_{k = 1}^{\beta}dx_{ij}^{(k)}.
$$

If $\mathbf{S} \in \mathfrak{S}_{m}^{\beta}$ (or $\mathbf{S} \in \mathfrak{T}_{U}^{\beta}(m)$ with
$t_{ii} >0$, $i = 1, \dots,m$) then $(d\mathbf{S})$ (the Lebesgue measure in $\mathfrak{S}_{m}^{\beta}$
or in $\mathfrak{T}_{U}^{\beta}(m)$) denotes the exterior product of the exterior product of the
$m(m-1)\beta/2 + m$ functionally independent variables,
$$
  (d\mathbf{S}) = \bigwedge_{i=1}^{m} ds_{ii}\bigwedge_{i > j}^{m}\bigwedge_{k = 1}^{\beta}
                      ds_{ij}^{(k)}.
$$
Observe, that for the Lebesgue measure $(d\mathbf{S})$ defined thus, it is required that $\mathbf{S} \in
\mathfrak{P}_{m}^{\beta}$, that is, $\mathbf{S}$ must be a non singular Hermitian matrix (Hermitian
definite positive matrix).

If $\mathbf{\Lambda} \in \mathfrak{D}_{m}^{\beta}$ then $(d\mathbf{\Lambda})$ (the Legesgue measure in
$\mathfrak{D}_{m}^{\beta}$) denotes the exterior product of the $\beta m$ functionally independent
variables
$$
  (d\mathbf{\Lambda}) = \bigwedge_{i = 1}^{n}\bigwedge_{k = 1}^{\beta}d\lambda_{i}^{(k)}.
$$
If $\mathbf{H}_{1} \in \mathcal{V}_{m,n}^{\beta}$ then
$$
  (\mathbf{H}^{*}_{1}d\mathbf{H}_{1}) = \bigwedge_{i=1}^{m} \bigwedge_{j =i+1}^{n}
  \mathbf{h}_{j}^{*}d\mathbf{h}_{i}.
$$
where $\mathbf{H} = (\mathbf{H}^{*}_{1}|\mathbf{H}^{*}_{2})^{*} = (\mathbf{h}_{1}, \dots,
\mathbf{h}_{m}|\mathbf{h}_{m+1}, \dots, \mathbf{h}_{n})^{*} \in \mathfrak{U}^{\beta}(n)$. It can be
proved that this differential form does not depend on the choice of the $\mathbf{H}_{2}$ matrix. When $n
= 1$; $\mathcal{V}^{\beta}_{m,1}$ defines the unit sphere in $\mathfrak{A}^{m}$. This is, of course, an
$(m-1)\beta$- dimensional surface in $\mathfrak{A}^{m}$. When $n = m$ and denoting $\mathbf{H}_{1}$ by
$\mathbf{H}$, $(\mathbf{H}d\mathbf{H}^{*})$ is termed the \emph{Haar measure} on
$\mathfrak{U}^{\beta}(m)$.

The surface area or volume of the Stiefel manifold $\mathcal{V}^{\beta}_{m,n}$ is
\begin{equation}\label{vol}
    \Vol(\mathcal{V}^{\beta}_{m,n}) = \int_{\mathbf{H}_{1} \in
  \mathcal{V}^{\beta}_{m,n}} (\mathbf{H}_{1}d\mathbf{H}^{*}_{1}) =
  \frac{2^{m}\pi^{mn\beta/2}}{\Gamma^{\beta}_{m}[n\beta/2]},
\end{equation}
where $\Gamma^{\beta}_{m}[a]$ denotes the multivariate \emph{Gamma function} for the space
$\mathfrak{S}_{m}^{\beta}$. This can be obtained as a particular case of the \emph{generalised gamma
function of weight $\kappa$} for the space $\mathfrak{S}^{\beta}_{m}$ with $\kappa = (k_{1}, k_{2},
\dots, k_{m}) \in \Re^{m}$, taking $\kappa =(0,0,\dots,0) \in \Re^{m}$ and which for $\re(a) \geq
(m-1)\beta/2 - k_{m}$ is defined by, see \citet{gr:87} and \citet{fk:94},
\begin{eqnarray}\label{int1}
  \Gamma_{m}^{\beta}[a,\kappa] &=& \displaystyle\int_{\mathbf{A} \in \mathfrak{P}_{m}^{\beta}}
  \etr\{-\mathbf{A}\} |\mathbf{A}|^{a-(m-1)\beta/2 - 1} q_{\kappa}(\mathbf{A}) (d\mathbf{A}) \\
&=& \pi^{m(m-1)\beta/4}\displaystyle\prod_{i=1}^{m} \Gamma[a + k_{i}
    -(i-1)\beta/2]\nonumber\\ \label{gammagen1}
&=& [a]_{\kappa}^{\beta} \Gamma_{m}^{\beta}[a],
\end{eqnarray}
where $\etr(\cdot) = \exp(\tr(\cdot))$, $|\cdot|$ denotes the determinant, and for $\mathbf{A} \in
\mathfrak{S}_{m}^{\beta}$
\begin{equation}\label{hwv}
    q_{\kappa}(\mathbf{A}) = |\mathbf{A}_{m}|^{k_{m}}\prod_{i = 1}^{m-1}|\mathbf{A}_{i}|^{k_{i}-k_{i+1}}
\end{equation}
with $\mathbf{A}_{p} = (a_{rs})$, $r,s = 1, 2, \dots, p$, $p = 1,2, \dots, m$ is termed the \emph{highest
weight vector}, see \citet{gr:87}. Also,
\begin{eqnarray*}
% \nonumber to remove numbering (before each equation)
  \Gamma_{m}^{\beta}[a] &=& \displaystyle\int_{\mathbf{A} \in \mathfrak{P}_{m}^{\beta}}
  \etr\{-\mathbf{A}\} |\mathbf{A}|^{a-(m-1)\beta/2 - 1}(d\mathbf{A}) \\ \label{cgamma}
    &=& \pi^{m(m-1)\beta/4}\displaystyle\prod_{i=1}^{m} \Gamma[a-(i-1)\beta/2],
\end{eqnarray*}
and $\re(a)> (m-1)\beta/2$.

In other branches of mathematics the \textit{highest weight vector} $q_{\kappa}(\mathbf{A})$ is also
termed the \emph{generalised power} of $\mathbf{A}$ and is denoted as $\Delta_{\kappa}(\mathbf{A})$, see
\citet{fk:94} and \citet{hl:01}.

Additional properties of $q_{\kappa}(\mathbf{A})$, which are immediate consequences of the definition of
$q_{\kappa}(\mathbf{A})$ are:
\begin{enumerate}
  \item Let $\mathbf{A} = \mathbf{L}^{*}\mathbf{DL}$ be the L'DL decomposition of $\mathbf{A} \in \mathfrak{P}_{m}^{\beta}$,
        where $\mathbf{L} \in \mathfrak{T}_{U}^{\beta}(m)$ with $l_{ii} = 1$, $i = 1, 2, \ldots ,m$ and
        $\mathbf{D} = \diag(\lambda_{1}, \dots, \lambda_{m})$, $\lambda_{i} \geq 0$, $i = 1, 2, \ldots
        ,m$. Then
        \begin{equation}\label{qk1}
          q_{\kappa}(\mathbf{A}) = \prod_{i=1}^{m} \lambda_{i}^{k_{i}}.
        \end{equation}
      \item
      \begin{equation}\label{qk2}
        q_{\kappa}(\mathbf{A}^{-1}) =  q_{-\kappa^{*}}^{*}(\mathbf{A}),
      \end{equation}
      where $\kappa^{*}=(k_{m}, k_{m-1}, \dots,k_{1})$, $-\kappa^{*}=(-k_{m}, -k_{m-1},
      \dots,-k_{1})$,
      \begin{equation}\label{hhwv}
         q_{\kappa}^{*}(\mathbf{A}) = |\mathbf{A}_{m}|^{k_{m}}\prod_{i = 1}^{m-1}|\mathbf{A}_{i}|^{k_{i}-k_{i+1}}
      \end{equation}
      and
      \begin{equation}\label{qqk1}
        q_{\kappa}^{*}(\mathbf{A}) = \prod_{i=1}^{m} \lambda_{i}^{k_{m-i+1}},
      \end{equation}
      see \citet[pp. 126-127 and Proposition VII.1.5]{fk:94}.

  Alternatively, let $\mathbf{A} = \mathbf{T}^{*}\mathbf{T}$ the Cholesky's decomposition of
  matrix $\mathbf{A} \in \mathfrak{P}_{m}^{\beta}$, with $\mathbf{T}=(t_{ij}) \in
  \mathfrak{T}_{U}^{\beta}(m)$, then $\lambda_{i} = t_{ii}^{2}$, $t_{ii} \geq 0$, $i = 1, 2,
  \ldots ,m$. See \citet[p. 931, first paragraph]{hl:01}, \citet[p. 390, lines -11 to
  -16]{hlz:05} and \citet[p.5, lines 1-6]{k:14}.
  \item if $\kappa = (p, \dots, p)$, then
    \begin{equation}\label{qk3}
        q_{\kappa}(\mathbf{A}) = |\mathbf{A}|^{p},
    \end{equation}
    in particular if $p=0$, then $q_{\kappa}(\mathbf{A}) = 1$.
  \item if $\tau = (t_{1}, t_{2}, \dots, t_{m})$, $t_{1}\geq t_{2}\geq \cdots \geq t_{m} \geq
  0$, then
    \begin{equation}\label{qk41}
        q_{\kappa+\tau}(\mathbf{A}) = q_{\kappa}(\mathbf{A})q_{\tau}(\mathbf{A}),
    \end{equation}
    in particular if $\tau = (p,p, \dots, p)$,  then
    \begin{equation}\label{qk42}
        q_{\kappa+\tau}(\mathbf{A}) \equiv q_{\kappa+p}(\mathbf{A}) = |\mathbf{A}|^{p} q_{\kappa}(\mathbf{A}).
    \end{equation}
    \item Finally, for $\mathbf{B} \in \mathfrak{T}_{U}^{\beta}(m)$  in such a manner that $\mathbf{C} =
    \mathbf{B}^{*}\mathbf{B} \in \mathfrak{S}_{m}^{\beta}$,
    \begin{equation}\label{qk5}
        q_{\kappa}(\mathbf{B}^{*}\mathbf{AB}) = q_{\kappa}(\mathbf{C})q_{\kappa}(\mathbf{A})
    \end{equation}
    and
    \begin{equation}\label{qk6}
        q_{\kappa}(\mathbf{B}^{*-1}\mathbf{A}\mathbf{B}^{-1}) = (q_{\kappa}(\mathbf{C}))^{-1}q_{\kappa}(\mathbf{A})
        = q_{-\kappa}(\mathbf{C})q_{\kappa}(\mathbf{A}),
    \end{equation}
see \citet[p. 776, eq. (2.1)]{hlz:08}.
\end{enumerate}
\begin{remark}
Let $\mathcal{P}(\mathfrak{S}_{m}^{\beta})$ denote the algebra of all polynomial functions on
$\mathfrak{S}_{m}^{\beta}$, and $\mathcal{P}_{k}(\mathfrak{S}_{m}^{\beta})$ the subspace of homogeneous
polynomials of degree $k$ and let $\mathcal{P}^{\kappa}(\mathfrak{S}_{m}^{\beta})$ be an irreducible
subspace of $\mathcal{P}(\mathfrak{S}_{m}^{\beta})$ such that
$$
  \mathcal{P}_{k}(\mathfrak{S}_{m}^{\beta}) = \sum_{\kappa}\bigoplus
  \mathcal{P}^{\kappa}(\mathfrak{S}_{m}^{\beta}).
$$
Note that $q_{\kappa}$ is a homogeneous polynomial of degree $k$, moreover $q_{\kappa} \in
\mathcal{P}^{\kappa}(\mathfrak{S}_{m}^{\beta})$, see \citet{gr:87}.
\end{remark}
In (\ref{gammagen1}), $[a]_{\kappa}^{\beta}$ denotes the generalised Pochhammer symbol of weight
$\kappa$, defined as
\begin{eqnarray*}
% \nonumber to remove numbering (before each equation)
  [a]_{\kappa}^{\beta} &=& \prod_{i = 1}^{m}(a-(i-1)\beta/2)_{k_{i}}\\
    &=& \frac{\pi^{m(m-1)\beta/4} \displaystyle\prod_{i=1}^{m}
    \Gamma[a + k_{i} -(i-1)\beta/2]}{\Gamma_{m}^{\beta}[a]} \\
    &=& \frac{\Gamma_{m}^{\beta}[a,\kappa]}{\Gamma_{m}^{\beta}[a]},
\end{eqnarray*}
where $\re(a) > (m-1)\beta/2 - k_{m}$ and
$$
  (a)_{i} = a (a+1)\cdots(a+i-1),
$$
is the standard Pochhammer symbol.

An alternative definition of the generalised gamma function of weight $\kappa$ is proposed by
\citet{k:66}, which is defined as%
\begin{eqnarray}\label{int2}
  \Gamma_{m}^{\beta}[a,-\kappa] &=& \displaystyle\int_{\mathbf{A} \in \mathfrak{P}_{m}^{\beta}}
    \etr\{-\mathbf{A}\} |\mathbf{A}|^{a-(m-1)\beta/2 - 1} q_{\kappa}(\mathbf{A}^{-1})
    (d\mathbf{A}) \\
&=& \pi^{m(m-1)\beta/4}\displaystyle\prod_{i=1}^{m} \Gamma[a - k_{i}
    -(m-i)\beta/2] \nonumber\\ \label{gammagen2}
&=& \displaystyle\frac{(-1)^{k} \Gamma_{m}^{\beta}[a]}{[-a +(m-1)\beta/2
    +1]_{\kappa}^{\beta}} ,
\end{eqnarray}
where $\re(a) > (m-1)\beta/2 + k_{1}$.

Finally, the following Jacobians involving the $\beta$ parameter, reflects the generalised power of the
algebraic technique; the can be seen as extensions of the full derived and unconnected results in the
real, complex or quaternion cases, see \citet{fk:94} and \citet{dggj:11}. These results are the base for
several matrix and matric variate generalised analysis.

\begin{proposition}\label{lemlt}
Let $\mathbf{X}$ and $\mathbf{Y} \in \mathfrak{L}_{n,m}^{\beta}$  be matrices of functionally independent
variables, and let $\mathbf{Y} = \mathbf{AXB} + \mathbf{C}$, where $\mathbf{A} \in
\mathfrak{L}_{n,n}^{\beta}$, $\mathbf{B} \in \mathfrak{L}_{m,m}^{\beta}$ and $\mathbf{C} \in
\mathfrak{L}_{n,m}^{\beta}$ are constant matrices. Then
\begin{equation}\label{lt}
    (d\mathbf{Y}) = |\mathbf{A}^{*}\mathbf{A}|^{m\beta/2} |\mathbf{B}^{*}\mathbf{B}|^{
    mn\beta/2}(d\mathbf{X}).
\end{equation}
\end{proposition}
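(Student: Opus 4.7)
The plan is to reduce the affine transformation $\mathbf{Y} = \mathbf{A}\mathbf{X}\mathbf{B} + \mathbf{C}$ to simpler pieces whose Jacobians are known, and then multiply them via the chain rule. First, the translation $\mathbf{X} \mapsto \mathbf{X} + \mathbf{C}$ shifts the entries by constants and leaves every differential $dx_{ij}^{(k)}$ invariant, so its Jacobian is $1$ and I may assume $\mathbf{C} = \mathbf{0}$. Then I factor the homogeneous map as $\mathbf{X} \mapsto \mathbf{Z} = \mathbf{A}\mathbf{X}$ followed by $\mathbf{Z} \mapsto \mathbf{Y} = \mathbf{Z}\mathbf{B}$, and compute each Jacobian factor separately.

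For the left multiplication $\mathbf{Z} = \mathbf{A}\mathbf{X}$, I treat the columns of $\mathbf{X}$ independently: writing $\mathbf{X} = (\mathbf{x}_{1}, \dots, \mathbf{x}_{m})$ and $\mathbf{Z} = (\mathbf{z}_{1}, \dots, \mathbf{z}_{m})$, the map acts as $m$ independent copies of $\mathbf{x}_{j} \mapsto \mathbf{A}\mathbf{x}_{j}$ on $\mathfrak{A}^{n}$. Identifying $\mathfrak{A}^{n}$ with $\Re^{\beta n}$ and using the standard identity that the real determinant of left multiplication by $\mathbf{A} \in \mathfrak{A}^{n \times n}$ has absolute value $|\mathbf{A}^{*}\mathbf{A}|^{\beta/2}$, each column contributes the factor $|\mathbf{A}^{*}\mathbf{A}|^{\beta/2}$, and taking the exterior product over all $m$ columns yields $|\mathbf{A}^{*}\mathbf{A}|^{m\beta/2}$. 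The right multiplication $\mathbf{Y} = \mathbf{Z}\mathbf{B}$ is handled symmetrically by acting row-wise on the $n$ rows of $\mathbf{Z}$, or equivalently by passing to conjugate transposes (since $\mathbf{Y}^{*} = \mathbf{B}^{*}\mathbf{Z}^{*}$) and reducing to the previous case; this contributes the analogous factor coming from $\mathbf{B}$. Multiplying the two Jacobians and restoring the trivial translation yields (\ref{lt}).

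The main obstacle is keeping the argument uniform across the four algebras $\Re$, $\mathfrak{C}$, $\mathfrak{H}$ and $\mathfrak{O}$: over $\Re$ and $\mathfrak{C}$ the computation follows immediately from $\Vec$ and Kronecker products; in the quaternionic case one must watch left versus right scalar multiplication when identifying $\mathfrak{A}$-linear maps with real matrices; and in the octonionic case, where associativity fails, the linearization is safest performed at the level of the componentwise differentials $dx_{ij}^{(k)}$, where the alternativity of $\mathfrak{O}$ suffices to justify the needed determinantal identity between the real Jacobian and $|\cdot^{*}\cdot|^{\beta/2}$. Once that identity is in place from the preliminaries of Section \ref{sec2} and the references \citet{dggj:11} and \citet{dggj:13} cited therein, the remainder of the proof is a routine exterior-algebra bookkeeping of one determinantal factor per column (for $\mathbf{A}$) and one per row (for $\mathbf{B}$).
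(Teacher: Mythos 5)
The paper itself gives no proof of Proposition \ref{lemlt}: it is quoted as a known Jacobian from \citet{fk:94} and \citet{dggj:11}, so there is no argument of the authors' to compare yours against. Your route --- kill the translation, split $\mathbf{AXB}$ into a left and a right multiplication, and pick up one determinantal factor per column for $\mathbf{A}$ and one per row for $\mathbf{B}$ --- is the standard derivation used in those references, and as a plan it is sound.

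There is, however, one concrete point you must confront: your computation does \emph{not} yield (\ref{lt}) as printed. By your own accounting, right multiplication by $\mathbf{B}$ contributes $|\mathbf{B}^{*}\mathbf{B}|^{\beta/2}$ for each of the $n$ rows, hence $|\mathbf{B}^{*}\mathbf{B}|^{n\beta/2}$ in total, whereas the displayed formula has $|\mathbf{B}^{*}\mathbf{B}|^{mn\beta/2}$. A dimension check with $\mathbf{B} = b\,\mathbf{I}_{m}$ confirms your exponent: the map $\mathbf{X} \mapsto \mathbf{X}b$ scales all $\beta mn$ real coordinates, giving Jacobian $|b|^{\beta mn} = |\mathbf{B}^{*}\mathbf{B}|^{n\beta/2}$, not $|b|^{\beta m^{2}n}$. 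So the exponent in the statement is a typographical error (the cited source \citet{dggj:11} has $n\beta/2$), and you should say so explicitly rather than assert that multiplying your two factors ``yields (\ref{lt})''; as written that final sentence is false. Two smaller remarks: the identity $\det_{\Re}(L_{\mathbf{A}}) = |\mathbf{A}^{*}\mathbf{A}|^{\beta/2}$ is the entire analytic content of the proposition and you only cite it, which is acceptable here since the paper does no more; and for $\beta = 8$ even the statement is conjectural (octonionic matrix multiplication is not associative, and the paper concedes that the octonion case ``can only be conjectured''), so your appeal to alternativity should be presented as a heuristic, not a proof.
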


\begin{proposition}\label{lemhlt}
Let $\mathbf{X}$ and $\mathbf{Y} \in \mathfrak{S}_{m}^{\beta}$ be matrices of functionally independent
variables, and let $\mathbf{Y} = \mathbf{AXA^{*}} + \mathbf{C}$, where $\mathbf{A} \in
\mathfrak{L}_{m,m}^{\beta}$ and $\mathbf{C} \in \mathfrak{S}_{m}^{\beta}$ are constant matrices. Then
\begin{equation}\label{hlt}
    (d\mathbf{Y}) = |\mathbf{A}^{*}\mathbf{A}|^{(m-1)\beta/2+1} (d\mathbf{X}).
\end{equation}
\end{proposition}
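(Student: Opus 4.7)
The plan is to reduce the statement to computing the Jacobian of the linear map $\varphi:\mathbf{X}\mapsto \mathbf{A}\mathbf{X}\mathbf{A}^{*}$ on the real vector space $\mathfrak{S}_{m}^{\beta}$, whose real dimension is $m+m(m-1)\beta/2$. Translation by the constant matrix $\mathbf{C}\in\mathfrak{S}_{m}^{\beta}$ has Jacobian $1$, so absorbing $\mathbf{C}$ is the first and harmless step. After that, the problem is purely the Jacobian determinant $|\det_{\Re}(\varphi)|$.

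The second step is to use a QR-type factorisation $\mathbf{A}=\mathbf{H}\mathbf{T}$ with $\mathbf{H}\in\mathfrak{U}^{\beta}(m)$ and $\mathbf{T}\in\mathfrak{T}_{U}^{\beta}(m)$ having positive real diagonal entries $t_{11},\dots,t_{mm}$. Then $\varphi=\psi_{\mathbf{H}}\circ\psi_{\mathbf{T}}$, where $\psi_{\mathbf{B}}(\mathbf{X})=\mathbf{B}\mathbf{X}\mathbf{B}^{*}$. The unitary factor is easy: $\psi_{\mathbf{H}}$ is an orthogonal transformation of $\mathfrak{S}_{m}^{\beta}$ with respect to its natural inner product $\langle \mathbf{X},\mathbf{Y}\rangle=\re\tr(\mathbf{X}\mathbf{Y}^{*})$, and since $\mathbf{H}^{*}\mathbf{H}=\mathbf{I}_{m}$ gives $|\mathbf{H}^{*}\mathbf{H}|=1$, the contribution of $\psi_{\mathbf{H}}$ to the Jacobian is $1$, consistent with the claimed formula. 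So everything reduces to proving (\ref{hlt}) when $\mathbf{A}=\mathbf{T}$ is upper triangular.

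The third step is the explicit triangular computation. Order the independent entries of $\mathbf{X}$ and $\mathbf{Y}=\mathbf{T}\mathbf{X}\mathbf{T}^{*}$ lexicographically, for example $(11),(12),\dots,(1m),(22),(23),\dots,(mm)$, each off-diagonal position carrying $\beta$ real components and each diagonal position carrying one. Because $t_{ik}=0$ for $i>k$, the expansion
\[
y_{ij}=\sum_{k\ge i,\ \ell\ge j} t_{ik}\,x_{k\ell}\,\bar t_{j\ell}
\]
shows that the partial derivative matrix in this ordering is block triangular: $y_{ij}$ depends only on $x_{k\ell}$ with $(k,\ell)\succeq (i,j)$. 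The diagonal coefficient of $x_{ii}$ in $y_{ii}$ is $t_{ii}^{2}$ (one real variable), and for $i<j$ the diagonal coefficient of $x_{ij}$ in $y_{ij}$ is $t_{ii}t_{jj}$, acting independently on each of the $\beta$ real components. Collecting, the Jacobian equals
\[
\prod_{i=1}^{m} t_{ii}^{2}\cdot\prod_{i<j}\bigl(t_{ii}t_{jj}\bigr)^{\beta}
=\prod_{i=1}^{m} t_{ii}^{\,2+(m-1)\beta},
\]
since each $t_{ii}$ appears in exactly $m-1$ of the off-diagonal pairs. Finally, $|\mathbf{A}^{*}\mathbf{A}|=|\mathbf{T}^{*}\mathbf{T}|=\prod_{i}t_{ii}^{2}$, hence $|\mathbf{A}^{*}\mathbf{A}|^{(m-1)\beta/2+1}=\prod_{i}t_{ii}^{\,2+(m-1)\beta}$, which matches and yields (\ref{hlt}).

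The main obstacle is the dimension bookkeeping: making sure that off-diagonal Hermitian entries contribute $\beta$ real degrees of freedom while the diagonal contributes only $1$, and that the complex/quaternion/octonion coefficients $t_{ii}\bar t_{jj}$ act as genuine scalar multiplications on the $\beta$-dimensional fibres. Over $\Re,\mathfrak{C},\mathfrak{H}$ this is immediate from associativity; over $\mathfrak{O}$ one must invoke the fact that $t_{ii}$ is \emph{real} (because $\mathbf{T}$ is the Cholesky-type factor of $\mathbf{A}^{*}\mathbf{A}$), so left multiplication by $t_{ii}$ commutes with every octonionic component and no alternativity subtleties arise. This same reality of the diagonal also justifies the unitary invariance used in the second step. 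Once these points are checked, the three factor Jacobians multiply to give the stated formula.
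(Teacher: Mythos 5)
Your argument is correct, but note that the paper itself offers no proof of Proposition \ref{lemhlt}: it is quoted as a known Jacobian result with pointers to \citet{fk:94} and \citet{dggj:11}. Your derivation is essentially the standard one found in those sources --- absorb the translation $\mathbf{C}$, split $\mathbf{A}=\mathbf{HT}$ by a QR-type factorisation, dispose of the unitary factor by orthogonality of $\mathbf{X}\mapsto\mathbf{HXH}^{*}$ with respect to the trace form (only $|\det|=1$ is needed, since the Jacobian is taken in absolute value), and then read off the determinant of the triangular congruence from the diagonal blocks $t_{ii}^{2}$ (one real coordinate each) and $(t_{ii}t_{jj})^{\beta}$ ($\beta$ real coordinates each), giving $\prod_{i}t_{ii}^{2+(m-1)\beta}=|\mathbf{A}^{*}\mathbf{A}|^{(m-1)\beta/2+1}$. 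Two small points deserve to be made explicit. First, in the block-triangularity claim, terms with $k\geq i$, $\ell\geq j$ but $k>\ell$ index entries below the diagonal, so one must pass to the conjugate position $(\ell,k)$ and check that it still follows $(i,j)$ in your lexicographic order; this does work out (if $\ell=i$ then necessarily $\ell=i=j$ and $k\geq j$), but the proof should say so. Second, your reliance on the reality of the $t_{ii}$ to make $x\mapsto t_{ii}x\bar t_{jj}$ a genuine scalar multiplication on the $\beta$-dimensional fibre is exactly right for $\beta=1,2,4$; for $\beta=8$ the QR factorisation and the unitary-invariance step are themselves only formal, which is consistent with the paper's own disclaimer that the octonionic results are conjectural.
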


\begin{proposition}[Singular Value Decomposition, $SVD$]\label{lemsvd}
Let $\mathbf{X} \in {\mathcal L}_{n,m}^{\beta}$  be matrix of functionally independent variables, such
that $\mathbf{X} = \mathbf{W}_{1}\mathbf{D}\mathbf{V}^{*}$ \ with \ $\mathbf{W}_{1} \in {\mathcal
V}_{m,n}^{\beta}$, $\mathbf{V} \in \mathfrak{U}^{\beta}(m)$ \ and \ $\mathbf{D} = \diag(d_{1},
\cdots,d_{m}) \in \mathfrak{D}_{m}^{1}$, $d_{1}> \cdots > d_{m} > 0$. Then
\begin{equation}\label{svd}
    (d\mathbf{X}) = 2^{-m}\pi^{\varrho} \prod_{i = 1}^{m} d_{i}^{\beta(n - m + 1) -1}
    \prod_{i < j}^{m}(d_{i}^{2} - d_{j}^{2})^{\beta} (d\mathbf{D}) (\mathbf{V}^{*}d\mathbf{V})
    (\mathbf{W}_{1}^{*}d\mathbf{W}_{1}),
\end{equation}
where
$$
  \varrho = \left\{
             \begin{array}{rl}
               0, & \beta = 1; \\
               -m, & \beta = 2; \\
               -2m, & \beta = 4; \\
               -4m, & \beta = 8.
             \end{array}
           \right.
$$
\end{proposition}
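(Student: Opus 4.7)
The plan is to derive the claimed Jacobian by composing two simpler, well-known Jacobians and a scalar substitution, rather than by a direct exterior-product calculation on the non-square parameterization $\mathbf{X} = \mathbf{W}_{1}\mathbf{D}\mathbf{V}^{*}$. Concretely, I would decompose $\mathbf{X}$ polarly as $\mathbf{W}_{1}\mathbf{S}^{1/2}$ with $\mathbf{S} = \mathbf{X}^{*}\mathbf{X}$, then spectrally decompose $\mathbf{S} = \mathbf{V}\mathbf{\Lambda}\mathbf{V}^{*}$ with $\mathbf{\Lambda} = \diag(\lambda_{1},\dots,\lambda_{m})$, and finally substitute $\lambda_{i}=d_{i}^{2}$.

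For the polar stage I would establish the intermediate identity
\[
(d\mathbf{X}) = 2^{-m} |\mathbf{S}|^{(\beta(n-m+1)-2)/2} (d\mathbf{S}) (\mathbf{W}_{1}^{*}d\mathbf{W}_{1})
\]
by going through a QR factorization $\mathbf{X} = \mathbf{W}_{1}\mathbf{T}$ with $\mathbf{T} \in \mathfrak{T}_{U}^{\beta}(m)$ of positive diagonal. A column-by-column application of Proposition \ref{lemlt} supplies the QR Jacobian with weights $\prod_{i=1}^{m} t_{ii}^{\beta(n-i+1)-1}$, and Proposition \ref{lemhlt} together with the substitution $t_{ii}^{2}=\lambda_{i}$ converts the $(d\mathbf{T})$ form into the $(d\mathbf{S})$ form stated above.

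For the spectral stage I would invoke the Hermitian spectral-decomposition Jacobian
\[
(d\mathbf{S}) = 2^{-m}\pi^{\varrho} \prod_{i<j}(\lambda_{i}-\lambda_{j})^{\beta} (d\mathbf{\Lambda})(\mathbf{V}^{*}d\mathbf{V}),
\]
where the constant $\pi^{\varrho}$ absorbs the volume of the centralizer of a regular $\mathbf{\Lambda}$ inside $\mathfrak{U}^{\beta}(m)$ — discrete (modulo signs) in the real case so $\varrho=0$, an $m$-torus in the complex case giving $\varrho=-m$, and correspondingly larger for $\beta=4,8$. The change of variable $\lambda_{i}=d_{i}^{2}$ contributes $(d\mathbf{\Lambda})=2^{m}\prod_{i} d_{i}\,(d\mathbf{D})$, turns $|\mathbf{S}|^{(\beta(n-m+1)-2)/2}$ into $\prod_{i} d_{i}^{\beta(n-m+1)-2}$, and rewrites $\prod_{i<j}(\lambda_{i}-\lambda_{j})^{\beta}$ as $\prod_{i<j}(d_{i}^{2}-d_{j}^{2})^{\beta}$. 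Collecting $2^{-m}\cdot 2^{-m}\cdot 2^{m} = 2^{-m}$ and combining the $d_{i}$ exponents $(\beta(n-m+1)-2)+1 = \beta(n-m+1)-1$ yields exactly the claimed formula.

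The principal obstacle will be pinning down the constant $\varrho$ from first principles: it requires a careful accounting of the normalization of the Haar-type form $(\mathbf{V}^{*}d\mathbf{V})$ on $\mathfrak{U}^{\beta}(m)$ versus on the quotient by the centralizer of $\mathbf{\Lambda}$, and the case $\beta=8$ in particular demands extra care because $\mathfrak{O}$ is non-associative and $\mathfrak{U}^{8}(m)$ is defined only for very small $m$. Once $\varrho$ is fixed correctly for each algebra, the remainder of the argument is routine bookkeeping of powers of $d_{i}$ and of $2$.
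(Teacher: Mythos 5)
You should first note that the paper supplies no proof of Proposition \ref{lemsvd}: it is listed among the preliminary Jacobians and quoted from \citet{fk:94} and \citet{dggj:11}, so there is no internal argument to compare yours against. Your strategy --- peel off the Stiefel part to reduce to $\mathbf{S}=\mathbf{X}^{*}\mathbf{X}$, spectrally decompose $\mathbf{S}=\mathbf{V}\boldsymbol{\Lambda}\mathbf{V}^{*}$, and substitute $\lambda_{i}=d_{i}^{2}$ --- is the standard derivation, and your bookkeeping is right: your polar-stage identity is precisely Proposition \ref{lemW}, since $(\beta(n-m+1)-2)/2=\beta(n-m+1)/2-1$, and the powers of $d_{i}$ and of $2$ combine exactly as you claim. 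However, your justification of the intermediate Jacobians is off: neither the $QR$ Jacobian $(d\mathbf{X})=\prod_{i}t_{ii}^{\beta(n-i+1)-1}(d\mathbf{T})(\mathbf{W}_{1}^{*}d\mathbf{W}_{1})$ nor the Cholesky Jacobian for $\mathbf{S}=\mathbf{T}^{*}\mathbf{T}$ follows from Propositions \ref{lemlt} and \ref{lemhlt}, which concern affine changes of variables with \emph{constant} coefficient matrices; the $QR$ step requires the exterior calculus of $d\mathbf{X}=d\mathbf{W}_{1}\mathbf{T}+\mathbf{W}_{1}\,d\mathbf{T}$ on the Stiefel manifold. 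Since the combined statement you need is literally Proposition \ref{lemW}, you should cite it rather than re-derive it.

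The substantive gap is the spectral-decomposition Jacobian $(d\mathbf{S})=2^{-m}\pi^{\varrho}\prod_{i<j}(\lambda_{i}-\lambda_{j})^{\beta}(d\boldsymbol{\Lambda})(\mathbf{V}^{*}d\mathbf{V})$, which you assert but do not prove, and which carries the entire content of the constant $\pi^{\varrho}$; as written you have reduced one quoted Jacobian to another of comparable depth. Your centralizer heuristic does give the right answer for $\beta=1,2,4$: by (\ref{vol}), $\Vol(\mathfrak{U}^{\beta}(1))=2\pi^{\beta/2}/\Gamma[\beta/2]$ equals $2$, $2\pi$ and $2\pi^{2}$ respectively, so the reciprocal of the volume of the $m$-fold centralizer $\mathfrak{U}^{\beta}(1)^{m}$ is $2^{-m}\pi^{0}$, $2^{-m}\pi^{-m}$ and $2^{-m}\pi^{-2m}$, matching $\varrho=0,-m,-2m$. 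But for $\beta=8$ the same computation gives $\Vol(\mathfrak{U}^{8}(1))=2\pi^{4}/\Gamma[4]=\pi^{4}/3$, whose reciprocal $m$-th power is $3^{m}\pi^{-4m}$ rather than $2^{-m}\pi^{-4m}$; so your heuristic does not reproduce the stated constant in the octonion case, which would need separate treatment (the paper itself regards the octonion results as conjectural). To make the argument complete even for $\beta\leq 4$ you must still establish the Vandermonde factor $\prod_{i<j}(\lambda_{i}-\lambda_{j})^{\beta}$ and the accompanying constant by a direct exterior-product computation, e.g.\ the analogue for each algebra of \citet[Theorem 3.2.17]{m:82}.
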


\begin{proposition}\label{lemW}
Let $\mathbf{X} \in \mathfrak{L}_{n,m}^{\beta}$  be matrix of functionally independent variables, and
write $\mathbf{X}=\mathbf{V}_{1}\mathbf{T}$, where $\mathbf{V}_{1} \in {\mathcal V}_{m,n}^{\beta}$ and
$\mathbf{T}\in \mathfrak{T}_{U}^{\beta}(m)$ with positive diagonal elements. Define $\mathbf{S} =
\mathbf{X}^{*}\mathbf{X} \in \mathfrak{P}_{m}^{\beta}.$ Then
\begin{equation}\label{w}
    (d\mathbf{X}) = 2^{-m} |\mathbf{S}|^{\beta(n - m + 1)/2 - 1}
    (d\mathbf{S})(\mathbf{V}_{1}^{*}d\mathbf{V}_{1}),
\end{equation}
\end{proposition}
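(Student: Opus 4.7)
The plan is to route the change of variables through the intermediate upper-triangular piece $\mathbf{T}$ by composing two Jacobians: that of the $QR$-type factorisation $\mathbf{X}=\mathbf{V}_{1}\mathbf{T}$ and that of the Cholesky factorisation $\mathbf{S}=\mathbf{T}^{*}\mathbf{T}$; eliminating $\mathbf{T}$ between the two resulting identities then yields (\ref{w}). For the first step I would complete $\mathbf{V}_{1}$ to a unitary matrix $\mathbf{V}=(\mathbf{V}_{1}\,|\,\mathbf{V}_{2})\in\mathfrak{U}^{\beta}(n)$ and form $\mathbf{V}^{*}d\mathbf{X}=\mathbf{V}^{*}\mathbf{V}_{1}\,d\mathbf{T}+\mathbf{V}^{*}d\mathbf{V}_{1}\,\mathbf{T}$. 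Since $|\mathbf{V}^{*}\mathbf{V}|=1$, Proposition \ref{lemlt} guarantees that the exterior product of $d\mathbf{X}$ equals that of $\mathbf{V}^{*}d\mathbf{X}$. Splitting into the top $m\times m$ block $d\mathbf{T}+\mathbf{V}_{1}^{*}d\mathbf{V}_{1}\,\mathbf{T}$ and the bottom $(n-m)\times m$ block $\mathbf{V}_{2}^{*}d\mathbf{V}_{1}\,\mathbf{T}$, the bottom block is multiplied column-wise by $\mathbf{T}$, so its wedge extracts the Stiefel differentials $\mathbf{h}_{j}^{*}d\mathbf{h}_{i}$ with $j>m$ together with a factor $\prod_{i}t_{ii}^{\beta(n-m)}$. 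The top block is handled via a triangular elimination: using that $\mathbf{V}_{1}^{*}d\mathbf{V}_{1}$ is anti-Hermitian so that only the strictly lower-triangular slots and the imaginary-diagonal slots are free parameters, one isolates $d\mathbf{T}$ in the upper-triangular entries and pairs the remaining slots of $(\mathbf{V}_{1}^{*}d\mathbf{V}_{1})$ with the appropriate diagonal entries of $\mathbf{T}$. Accumulating powers of $t_{ii}$ from all slots yields
\[(d\mathbf{X})=\prod_{i=1}^{m}t_{ii}^{\beta(n-i+1)-1}\,(d\mathbf{T})\,(\mathbf{V}_{1}^{*}d\mathbf{V}_{1}).\]

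For the second step I would view $\mathbf{S}=\mathbf{T}^{*}\mathbf{T}$ as a triangular change of variables on $\mathfrak{S}_{m}^{\beta}$. Since $s_{ij}=\sum_{k\le\min(i,j)}t_{ki}^{*}t_{kj}$, listing the pairs $(i,j)$ with $i\le j$ lexicographically makes the Jacobian matrix block lower-triangular, with a scalar diagonal $2t_{ii}$ attached to each $s_{ii}$ and a $\beta\times\beta$ diagonal block $t_{ii}\mathbf{I}_{\beta}$ attached to each off-diagonal $s_{ij}$. Multiplying the block determinants gives
\[(d\mathbf{S})=2^{m}\prod_{i=1}^{m}t_{ii}^{\beta(m-i)+1}\,(d\mathbf{T}).\]
Solving for $(d\mathbf{T})$ and substituting back, the exponents of $t_{ii}$ telescope uniformly to $\beta(n-i+1)-1-\beta(m-i)-1=\beta(n-m+1)-2$, and since $|\mathbf{S}|=\prod_{i=1}^{m}t_{ii}^{2}$ this common factor equals $|\mathbf{S}|^{\beta(n-m+1)/2-1}$, producing exactly (\ref{w}).

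The main obstacle is the exterior-algebra bookkeeping in the first step: one must wedge the $m^{2}$ $\beta$-dimensional slots of the top block in an order that avoids double counting between $d\mathbf{T}$ and the anti-Hermitian parameters of $(\mathbf{V}_{1}^{*}d\mathbf{V}_{1})$, and verify that, uniformly in the four algebras $\beta\in\{1,2,4,8\}$, the imaginary-diagonal slots (empty for $\beta=1$, of real dimension $\beta-1$ otherwise) are paired with the correct diagonal factor $t_{ii}^{\beta-1}$. Once this $QR$ Jacobian is secured, the Cholesky count of the second step is routine and the final combination is purely algebraic.
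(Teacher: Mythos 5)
Your proposal is correct and is essentially the paper's own route: the paper states Proposition \ref{lemW} without proof, citing \citet{fk:94} and \citet{dggj:11}, where exactly this two-step argument appears — the $QR$-type Jacobian $(d\mathbf{X})=\prod_{i=1}^{m}t_{ii}^{\beta(n-i+1)-1}(d\mathbf{T})(\mathbf{V}_{1}^{*}d\mathbf{V}_{1})$ composed with the Cholesky Jacobian $(d\mathbf{S})=2^{m}\prod_{i=1}^{m}t_{ii}^{\beta(m-i)+1}(d\mathbf{T})$, followed by the telescoping of exponents to $|\mathbf{S}|^{\beta(n-m+1)/2-1}$. Your exponent bookkeeping (including the pairing of the $(\beta-1)$-dimensional imaginary-diagonal slots of the anti-Hermitian form $\mathbf{V}_{1}^{*}d\mathbf{V}_{1}$ with $t_{ii}^{\beta-1}$) checks out uniformly in $\beta\in\{1,2,4,8\}$.
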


\section{Matrix multivariate $T$-Riesz distribution}\label{sec3}

A detailed discussion of Riesz distribution may be found in \citet{hl:01} and \citet{dg:15a}. In addition
the Kotz-Riesz distribution is studied in detail in \citet{dg:15c}. For your convenience, we adhere to
standard notation stated in \citet{dg:15a, dg:15c}. Before, consider the following two definitions of
Kotz-Riesz and Riesz distributions.

From  \citet{dg:15c}.
\begin{definition}\label{defKR}
Let $\boldsymbol{\Sigma} \in \boldsymbol{\Phi}_{m}^{\beta}$, $\boldsymbol{\Theta} \in
\boldsymbol{\Phi}_{n}^{\beta}$, $\boldsymbol{\mu} \in \mathfrak{L}^{\beta}_{n,m}$ and  $\kappa = (k_{1},
k_{2}, \dots, k_{m}) \in \Re^{m}$. And let $\mathbf{Y} \in \mathfrak{L}^{\beta}_{n,m}$ and
$\u(\mathbf{B}) \in \mathfrak{T}_{U}^{\beta}(n)$, such that $\mathbf{B} =
\u(\mathbf{B})^{*}\u(\mathbf{B})$ is the Cholesky decomposition of $\mathbf{B} \in
\mathfrak{S}_{m}^{\beta}$.
\begin{enumerate}
  \item Then it is said that $\mathbf{Y}$ has a Kotz-Riesz distribution of type I and its density function is
  $$
    \frac{\beta^{mn\beta/2+\sum_{i = 1}^{m}k_{i}}\Gamma_{m}^{\beta}[n\beta/2]}{\pi^{mn\beta/2}
    \Gamma_{m}^{\beta}[n\beta/2,\kappa] |\boldsymbol{\Sigma}|^{n\beta/2}|\boldsymbol{\Theta}|^{m\beta/2}}
    \etr\left\{- \beta\tr \left [\boldsymbol{\Sigma}^{-1} (\mathbf{Y} - \boldsymbol{\mu})^{*}
    \boldsymbol{\Theta}^{-1}(\mathbf{Y} - \boldsymbol{\mu})\right ]\right\}
  $$
  \begin{equation}\label{dfEKR1}\hspace{3.1cm}
    \times q_{\kappa}\left [\u(\boldsymbol{\Sigma})^{*-1} (\mathbf{Y} - \boldsymbol{\mu})^{*}
    \boldsymbol{\Theta}^{-1}(\mathbf{Y} - \boldsymbol{\mu})\u(\boldsymbol{\Sigma})^{-1}\right ](d\mathbf{Y})
  \end{equation}
  with $\re(n\beta/2) > (m-1)\beta/2 - k_{m}$;  denoting this fact as
  $$
    \mathbf{Y} \sim \mathcal{KR}^{\beta, I}_{n \times m}
    (\kappa,\boldsymbol{\mu}, \boldsymbol{\Theta}, \boldsymbol{\Sigma}).
  $$
  \item Then it is said that $\mathbf{Y}$ has a Kotz-Riesz distribution of type II and its density function is
  $$
    \frac{\beta^{mn\beta/2-\sum_{i = 1}^{m}k_{i}}\Gamma_{m}^{\beta}[n\beta/2]}{\pi^{mn\beta/2}\Gamma_{m}^{\beta}[n\beta/2,-\kappa]
    |\boldsymbol{\Sigma}|^{n\beta/2}|\boldsymbol{\Theta}|^{m\beta/2}}
     \etr\left\{- \beta\tr \left [\boldsymbol{\Sigma}^{-1} (\mathbf{Y} - \boldsymbol{\mu})^{*}
    \boldsymbol{\Theta}^{-1}(\mathbf{Y} - \boldsymbol{\mu})\right ]\right\}
  $$
  \begin{equation}\label{dfEKR2}\hspace{2.5cm}
    \times q_{\kappa}\left [\left(\u(\boldsymbol{\Sigma})^{*-1} (\mathbf{Y} - \boldsymbol{\mu})^{*}
    \boldsymbol{\Theta}^{-1}(\mathbf{Y} - \boldsymbol{\mu})\u(\boldsymbol{\Sigma})^{-1/2}\right)^{-1}\right ](d\mathbf{Y})
  \end{equation}
  with $\re(n\beta/2) > (m-1)\beta/2 + k_{1}$;  denoting this fact as
  $$
    \mathbf{Y} \sim \mathcal{KR}^{\beta, II}_{n \times m}
    (\kappa,\boldsymbol{\mu}, \boldsymbol{\Theta}, \boldsymbol{\Sigma}).
  $$
\end{enumerate}
\end{definition}

From \citet{hl:01} and \citet{dg:15a}.
\begin{definition}\label{defR}
Let $\mathbf{\Xi} \in \mathbf{\Phi}_{m}^{\beta}$ and  $\kappa = (k_{1}, k_{2}, \dots, k_{m}) \in
\Re^{m}$.
\begin{enumerate}
  \item Then it is said that $\mathbf{V}$ has a Riesz distribution of type I if its density function is
  \begin{equation}\label{dfR1}
    \frac{\beta^{am+\sum_{i = 1}^{m}k_{i}}}{\Gamma_{m}^{\beta}[a,\kappa] |\mathbf{\Xi}|^{a}q_{\kappa}(\mathbf{\Xi})}
    \etr\{-\beta\mathbf{\Xi}^{-1}\mathbf{V}\}|\mathbf{V}|^{a-(m-1)\beta/2 - 1}
    q_{\kappa}(\mathbf{V})(d\mathbf{V})
  \end{equation}
  for $\mathbf{V} \in \mathfrak{P}_{m}^{\beta}$ and $\re(a) \geq (m-1)\beta/2 - k_{m}$;
  denoting this fact as $\mathbf{V} \sim \mathcal{R}^{\beta, I}_{m}(a,\kappa,
  \mathbf{\Xi})$.
  \item Then it is said that $\mathbf{V}$ has a Riesz distribution of type II if its density function is
  \begin{equation}\label{dfR2}
     \frac{\beta^{am-\sum_{i = 1}^{m}k_{i}}}{\Gamma_{m}^{\beta}[a,-\kappa]
   |\mathbf{\Xi}|^{a}q_{\kappa}(\mathbf{\Xi}^{-1})}\etr\{-\beta\mathbf{\Xi}^{-1}\mathbf{V}\}
  |\mathbf{V}|^{a-(m-1)\beta/2 - 1} q_{\kappa}(\mathbf{V}^{-1}) (d\mathbf{V})
  \end{equation}
  for $\mathbf{V} \in \mathfrak{P}_{m}^{\beta}$ and $\re(a) > (m-1)\beta/2 + k_{1}$;
  denoting this fact as $\mathbf{V} \sim \mathcal{R}^{\beta, II}_{m}(a,\kappa,
  \mathbf{\Xi})$.
\end{enumerate}
\end{definition}

This way, in this section, two versions of the matrix multivariate $T$-Riesz distribution and the
corresponding generalised matrix multivariate beta type II distributions are obtained.

\begin{theorem}\label{teo4}
Let $(S^{1/2})^{2} = S \sim \mathcal{R}_{1}^{\beta,I}(\nu\beta/2, k,\rho)$,  $\rho
> 0$, $k \in \Re$ and $\re(\nu\beta/2)> -k$; independent of $\mathbf{Y} \sim \mathcal{KR}_{n \times
m}^{\beta,I}(\tau,\mathbf{0}, \mathbf{\Theta}, \mathbf{\Sigma})$, $\mathbf{\Sigma} \in
\mathfrak{P}_{m}^{\beta}$, $\mathbf{\Theta} \in \mathfrak{P}_{n}^{\beta}$  and $\re([n\beta/2)>
(m-1)\beta/2-t_{m}$. In addition, define $\mathbf{T} = S^{-1/2}\mathbf{Y}+ \boldsymbol{\mu}$ with
$\boldsymbol{\mu} \in \mathcal{L}_{n,m}^{\beta}$ a constant matrix. Then the density of $\mathbf{T}$ is
$$
    \propto  \left[1+\rho\tr \mathbf{\Sigma}^{-1}(\mathbf{T}- \boldsymbol{\mu})^{*}
    \mathbf{\Theta}^{-1}(\mathbf{T}- \boldsymbol{\mu}) \right]^{-[(\nu +mn)\beta/2+k+\sum_{i=1}^{m}t_{i}]}
$$
\begin{equation}\label{mmtr1}
    \hspace{2cm} \times \ q_{\tau}\left(\mathbf{\u(\Sigma})^{*-1}(\mathbf{T}- \boldsymbol{\mu})^{*}
    \mathbf{\Theta}^{-1}(\mathbf{T}- \boldsymbol{\mu})\u(\mathbf{\Sigma})^{-1}\right)(d\mathbf{T})
\end{equation}
with constant of proportionality
$$
  \frac{\Gamma^{\beta}_{m}[n\beta/2]\Gamma^{\beta}_{1}\left[(\nu+mn)\beta/2+k+\sum_{i=1}^{m}t_{i}\right]
  \rho^{\beta mn/2 + \sum_{i=1}^{m}t_{i}}}{\pi^{\beta mn/2} \Gamma^{\beta}_{m}[n\beta/2,\tau]
  \Gamma^{\beta}_{1}[\nu\beta/2+k]|\mathbf{\Sigma}|^{\beta n/2}|\mathbf{\Theta}|^{\beta m/2}},
$$
which is termed the \emph{matrix multivariate $T$-Riesz type I distribution} and is denoted as
$\mathbf{T} \sim \mathcal{MTR}_{m \times n}^{\beta,I}(\nu,k,\tau,\rho,\boldsymbol{\mu}, \mathbf{\Sigma},
\mathbf{\Theta})$.
\end{theorem}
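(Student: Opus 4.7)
The plan is to form the joint density of $(S, \mathbf{Y})$ using independence, transform $(S, \mathbf{Y}) \mapsto (S, \mathbf{T})$ via $\mathbf{Y} = S^{1/2}(\mathbf{T} - \boldsymbol{\mu})$ at fixed $S$, and then marginalise over $S$ with a univariate Gamma integral. Writing the joint density explicitly means combining (\ref{dfEKR1}) (with $\boldsymbol{\mu} = \mathbf{0}$ there, since the Kotz-Riesz matrix is centred) with the univariate Riesz density obtained by specialising (\ref{dfR1}) to $m = 1$: here $q_{k}(S) = S^{k}$, $\Gamma_{1}^{\beta}[a,\kappa] = \Gamma[a+k]$, and $|\mathbf{\Xi}|^{a}q_{\kappa}(\mathbf{\Xi}) = \rho^{\nu\beta/2 + k}$, leaving a factor $S^{\nu\beta/2 + k - 1}\exp(-\beta S/\rho)$ from the Riesz component.

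For the change of variables I would apply Proposition \ref{lemlt} with $\mathbf{A} = S^{1/2}\mathbf{I}_{n}$, $\mathbf{B} = \mathbf{I}_{m}$, $\mathbf{C} = -S^{1/2}\boldsymbol{\mu}$, so that $(d\mathbf{Y}) = |S\mathbf{I}_{n}|^{m\beta/2}(d\mathbf{T}) = S^{nm\beta/2}(d\mathbf{T})$. Substitution then gives $\tr[\mathbf{\Sigma}^{-1}\mathbf{Y}^{*}\mathbf{\Theta}^{-1}\mathbf{Y}] = S \cdot A$ with $A = \tr[\mathbf{\Sigma}^{-1}(\mathbf{T}-\boldsymbol{\mu})^{*}\mathbf{\Theta}^{-1}(\mathbf{T}-\boldsymbol{\mu})]$, and the argument of $q_{\tau}$ becomes $S\mathbf{M}$, where $\mathbf{M} = \u(\mathbf{\Sigma})^{*-1}(\mathbf{T}-\boldsymbol{\mu})^{*}\mathbf{\Theta}^{-1}(\mathbf{T}-\boldsymbol{\mu})\u(\mathbf{\Sigma})^{-1}$. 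The needed scaling identity $q_{\tau}(S\mathbf{M}) = S^{\sum_{i} t_{i}}q_{\tau}(\mathbf{M})$ follows in one line from property (\ref{qk1}), since multiplying the L'DL decomposition by a positive scalar multiplies each $\lambda_{i}$ by $S$. This pulls all the $S$-dependence into a single monomial.

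At this point the joint density of $(S, \mathbf{T})$ has the form $C\, e^{-\beta S(A + 1/\rho)}\, S^{p-1}\, q_{\tau}(\mathbf{M})\,(d\mathbf{T})\,dS$, with $p = (nm+\nu)\beta/2 + k + \sum_{i} t_{i}$. Integrating in $S$ is then a textbook Gamma integral, returning $\Gamma[p]/[\beta(A + 1/\rho)]^{p}$; rewriting $(A + 1/\rho)^{-p} = \rho^{p}(1 + \rho A)^{-p}$ produces exactly the kernel in (\ref{mmtr1}). The conditions $\re(\nu\beta/2) > -k$ and $\re(n\beta/2) > (m-1)\beta/2 - t_{m}$ in the hypotheses ensure the component densities are well defined and the Gamma integral converges.

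The rest is bookkeeping of constants. Tracking the Kotz-Riesz prefactor from (\ref{dfEKR1}), the univariate Riesz prefactor, the Jacobian $S^{nm\beta/2}$, and the output of the Gamma integral, the powers of $\beta$ cancel exactly (both sides contribute $mn\beta/2 + \sum_{i} t_{i} + \nu\beta/2 + k$), and the $\rho$ factors recombine as $\rho^{-\nu\beta/2 - k}\cdot \rho^{p} = \rho^{nm\beta/2 + \sum_{i} t_{i}}$, reproducing the constant displayed in the statement. I anticipate the only real subtlety to be the scalar-multiple identity for $q_{\tau}$ --- whose verification via (\ref{qk1}) is short --- and the care required to keep the Cholesky factor $\u(\mathbf{\Sigma})$ in the correct position when moving from $\mathbf{Y}$ to $\mathbf{T}$; everything else is algebraic rearrangement.
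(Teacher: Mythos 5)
Your proposal is correct and follows essentially the same route as the paper: form the joint density of $(S,\mathbf{Y})$ by independence, change variables to $(S,\mathbf{T})$ with Jacobian $S^{mn\beta/2}$ via Proposition \ref{lemlt}, pull the scalar out of $q_{\tau}$ using the homogeneity $q_{\tau}(S\mathbf{M})=S^{\sum_{i}t_{i}}q_{\tau}(\mathbf{M})$, and integrate out $S$ with a Gamma integral. The paper's proof is just a terser version of exactly these steps, so no further comment is needed.
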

\begin{proof}
From definition \ref{defKR} and \ref{defR}, the joint density of $S$ and $\mathbf{Y}$ is
$$
  \propto s^{\beta \nu/2+k -1} \etr\left\{-\beta \left(s/\rho + \tr\mathbf{\Sigma}^{-1} \mathbf{Y}^{*}
  \mathbf{\Theta}^{-1}\mathbf{Y}\right)\right\}  q_{\tau}\left(\tr\mathbf{\Sigma}^{-1} \mathbf{Y}^{*}
  \mathbf{\Theta}^{-1}\mathbf{Y})(ds)(d\mathbf{Y}\right)
$$
where the constant of proportionality is
$$
  c = \frac{\beta^{\nu\beta/2+k}}{\Gamma_{1}^{\beta}[\nu\beta/2+k] \rho^{\nu\beta/2+k}}
      \ \cdot \ \frac{\beta^{mn\beta/2+\sum_{i=1}^{m}t_{i}} \ \Gamma_{m}^{\beta}[n\beta/2]}{\pi^{mn\beta/2}
      \Gamma_{m}^{\beta}[n\beta/2,\tau] |\mathbf{\Sigma}|^{n\beta/2}|\mathbf{\Theta}|^{m\beta/2}}.
$$
Taking into account that by (\ref{lt})
$$
  (ds)(d\mathbf{Y})= s^{\beta mn/2}(ds)(d\mathbf{T}),
$$
the desired result is obtained integrating with respect to $s$.
\end{proof}

Similarly is obtained:
\begin{theorem}\label{teo41}
Let $\mathbf{T} = S^{-1/2}\mathbf{Y}+ \boldsymbol{\mu} \in \mathcal{L}_{n,m}^{\beta}$ where
$(S^{1/2})^{2} = S \sim \mathcal{R}_{1}^{\beta,II}(\nu\beta/2, k,\rho)$,  $\rho
> 0$, $k \in \Re$ and $\re(\nu\beta/2)> k$; independent of $\mathbf{Y} \sim \mathcal{KR}_{n \times
m}^{\beta,II}(\tau,\mathbf{0}, \mathbf{\Sigma}, \mathbf{\Theta})$, $\mathbf{\Sigma} \in
\mathfrak{P}_{m}^{\beta}$, $\mathbf{\Theta} \in \mathfrak{P}_{n}^{\beta}$  and $\re([n\beta/2)>
(m-1)\beta/2-t_{1}$. Then the density of $\mathbf{T}$ is
$$
    \propto  \left[1+\rho\tr \mathbf{\Sigma}^{-1}(\mathbf{T}- \boldsymbol{\mu})^{*}
    \mathbf{\Theta}^{-1}(\mathbf{T}- \boldsymbol{\mu}) \right]^{-[(\nu +mn)\beta/2-k-\sum_{i=1}^{m}t_{i}]}
$$
\begin{equation}\label{mmtr2}
    \hspace{2cm} \times \ q_{\tau}\left[\left(\u(\mathbf{\Sigma})^{*-1}(\mathbf{T}- \boldsymbol{\mu})^{*}
    \mathbf{\Theta}^{-1}(\mathbf{T}- \boldsymbol{\mu})\u(\mathbf{\Sigma})^{-1}\right)^{-1}\right]
\end{equation}
with constant of proportionality
$$
  \frac{\Gamma^{\beta}_{m}[n\beta/2]\Gamma^{\beta}_{1}\left[(\nu+mn)\beta/2-k-\sum_{i=1}^{m}t_{i}\right]
  \rho^{\beta mn/2-\sum_{i=1}^{m}t_{i}}}{\pi^{\beta mn/2}\Gamma^{\beta}_{m}[n\beta/2,-\tau]
  \Gamma^{\beta}_{1}[\nu\beta/2-k]|\mathbf{\Sigma}|^{\beta n/2}|\mathbf{\Theta}|^{\beta m/2}},
$$
which is termed the \emph{matrix multivariate $T$-Riesz type II distribution} and is denoted as
$\mathbf{T} \sim \mathcal{MTR}_{m \times n}^{\beta,II}(\nu,k,\tau,\rho,\boldsymbol{\mu}, \mathbf{\Theta},
\mathbf{\Sigma})$.
\end{theorem}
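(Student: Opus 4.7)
The strategy is a direct mirror of the proof of Theorem \ref{teo4}, with the Riesz~I and Kotz-Riesz~I densities replaced by their type~II counterparts from Definitions \ref{defR}(2) and \ref{defKR}(2). First, by independence I would write the joint density of $S$ and $\mathbf{Y}$. Using $m=1$ in (\ref{dfR2}) (so $q_{k}(v)=v^{k}$, $|v|^{a-(m-1)\beta/2-1}=v^{a-1}$, and $\Gamma_{1}^{\beta}[\,\cdot\,]=\Gamma(\cdot)$), the density of $S$ contributes the factor $s^{\nu\beta/2-k-1}\exp(-\beta s/\rho)$, while (\ref{dfEKR2}) for $\mathbf{Y}$ supplies the trace-exponential term together with $q_{\tau}\!\left[\bigl(\u(\mathbf{\Sigma})^{*-1}\mathbf{Y}^{*}\mathbf{\Theta}^{-1}\mathbf{Y}\,\u(\mathbf{\Sigma})^{-1}\bigr)^{-1}\right]$, multiplied by the product of both normalising constants.

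Next I would perform the change of variables $\mathbf{Y}=S^{1/2}(\mathbf{T}-\boldsymbol{\mu})$. Proposition \ref{lemlt} with $\mathbf{A}=s^{1/2}\mathbf{I}_{n}$, $\mathbf{B}=\mathbf{I}_{m}$ immediately gives the Jacobian factor $s^{mn\beta/2}$. The trace then rescales: $\tr\mathbf{\Sigma}^{-1}\mathbf{Y}^{*}\mathbf{\Theta}^{-1}\mathbf{Y}=s\,\tr\mathbf{\Sigma}^{-1}(\mathbf{T}-\boldsymbol{\mu})^{*}\mathbf{\Theta}^{-1}(\mathbf{T}-\boldsymbol{\mu})$. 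Setting
$$
  \mathbf{R}=\u(\mathbf{\Sigma})^{*-1}(\mathbf{T}-\boldsymbol{\mu})^{*}\mathbf{\Theta}^{-1}(\mathbf{T}-\boldsymbol{\mu})\,\u(\mathbf{\Sigma})^{-1},
$$
the argument of $q_{\tau}$ becomes $(s\mathbf{R})^{-1}=s^{-1}\mathbf{R}^{-1}$, and by (\ref{qk1}) applied after pulling the scalar $s^{-1}$ through the $\mathbf{L}'\mathbf{D}\mathbf{L}$ decomposition one obtains $q_{\tau}(s^{-1}\mathbf{R}^{-1})=s^{-\sum t_{i}}q_{\tau}(\mathbf{R}^{-1})$.

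Collecting the powers of $s$ yields an exponent $(\nu+mn)\beta/2-k-\sum t_{i}-1$, and the exponential reorganises as $\exp\!\bigl(-(\beta s/\rho)[1+\rho\tr\mathbf{\Sigma}^{-1}(\mathbf{T}-\boldsymbol{\mu})^{*}\mathbf{\Theta}^{-1}(\mathbf{T}-\boldsymbol{\mu})]\bigr)$. Integrating over $s\in(0,\infty)$ is then a one-dimensional gamma integral, which produces the factor $[1+\rho\tr(\cdot)]^{-[(\nu+mn)\beta/2-k-\sum t_{i}]}$ together with $\Gamma_{1}^{\beta}[(\nu+mn)\beta/2-k-\sum t_{i}]$ and an appropriate power of $\rho/\beta$.

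Finally I would multiply by the product of the original normalising constants and verify that all powers of $\beta$ cancel and the remaining $\rho$-exponent reduces to $mn\beta/2-\sum t_{i}$, recovering the stated constant of proportionality. The only slightly delicate step is the scaling identity $q_{\tau}(s^{-1}\mathbf{R}^{-1})=s^{-\sum t_{i}}q_{\tau}(\mathbf{R}^{-1})$, since here $q_{\tau}$ is evaluated at an inverse rather than at the matrix itself as in Theorem \ref{teo4}; everything else is bookkeeping of constants and is guaranteed to go through by the complete parallel with the type~I argument.
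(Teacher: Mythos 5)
Your proposal is correct and is precisely the argument the paper intends: the paper gives no separate proof of Theorem \ref{teo41}, merely stating ``Similarly is obtained'' after the proof of Theorem \ref{teo4}, and your write-up fills in exactly that parallel computation (joint type~II densities, Jacobian $s^{mn\beta/2}$ from Proposition \ref{lemlt}, the homogeneity $q_{\tau}(s^{-1}\mathbf{R}^{-1})=s^{-\sum_{i}t_{i}}q_{\tau}(\mathbf{R}^{-1})$, and the gamma integral in $s$). The scaling identity you flag as delicate does hold, since $q_{\tau}(c\mathbf{B})=c^{\sum_{i}t_{i}}q_{\tau}(\mathbf{B})$ for any scalar $c>0$, and your bookkeeping of the $\beta$ and $\rho$ powers reproduces the stated normalising constant.
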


Next we study the corresponding matrix multivariate beta type II distributions.

\begin{theorem}\label{teo7}
Define $\mathbf{F} = \mathbf{T}^{*}\mathbf{T} \in \mathfrak{P}_{m}^{\beta}$, with $n \geq m$ and observe
that
$$
   \mathbf{F} = S^{-1}\mathbf{Y}^{*}\mathbf{Y} = S^{-1}\mathbf{W}.
$$
\begin{enumerate}
  \item  If $\mathbf{T} \sim \mathcal{MT}_{n \times m}^{\beta,I}(\nu, k,\tau,\rho,\boldsymbol{0},\mathbf{I}_{n},
  \mathbf{\Sigma})$, then, under the conditions of Theorem \ref{teo4} we have that, $\mathbf{W}=\mathbf{Y}^{*}\mathbf{Y}
  \sim \mathcal{R}_{m}^{\beta,I}(n\beta/2,\tau,\mathbf{\Sigma})$, with $\re(n\beta/2) >(m-1)\beta/2-t_{m}$ and
  the density of $\mathbf{F}$ is,
  \begin{equation}\label{MMTR1}
    \propto |\mathbf{F}|^{(n-m+1)\beta/2-1} (1+\rho\tr\mathbf{\Sigma}^{-1}\mathbf{F})^{-\left[(mn+\nu)\beta/2+k+\sum_{i=1}^{m}t_{i}\right]}
    q_{\tau}(\mathbf{F})(d\mathbf{F}),
  \end{equation}
  with constant of proportionality
  $$
    \frac{\Gamma^{\beta}_{1}\left[(\nu+mn)\beta/2+k+\sum_{i=1}^{m}t_{i}\right] \rho^{\beta mn/2 + \sum_{i=1}^{m}t_{i}}}
    { \Gamma^{\beta}_{m}[n\beta/2,\tau] \Gamma^{\beta}_{1}[\nu\beta/2+k]|\mathbf{\Sigma}|^{n\beta/1}q_{\tau}(\mathbf{\Sigma})},
  $$
  where $\re([\nu\beta/2)> (m-1)\beta/2-k_{m}$ and $\re(m\beta/2)> (m-1)\beta/2-t_{m}$.
  $\mathbf{F}$ is said to have a \emph{matrix multivariate c-beta-Riesz type II distribution}.

  \item If $\mathbf{T} \sim \mathcal{MT}_{n \times m}^{\beta,II}(\nu, k,\tau,\rho,\boldsymbol{0},
  \mathbf{I}_{n},\mathbf{\Sigma})$, then, under the conditions of Theorem \ref{teo41} we obtain that,
  $\mathbf{W}=\mathbf{Y}^{*}\mathbf{Y} \sim \mathcal{R}_{m}^{\beta,II}(n\beta/2,\tau,\mathbf{\Sigma})$, with
  $\re(n\beta/2) >(m-1)\beta/2+t_{1}$ and the density of $\mathbf{F}$ is,
 \begin{equation}\label{MMTR2}
    \propto |\mathbf{F}|^{(n-m+1)\beta/2-1} (1+\rho\tr\mathbf{\Sigma}^{-1}\mathbf{F})^{-\left[(mn+\nu)\beta/2-k-
    \sum_{i=1}^{m}t_{i}\right]}  q_{\tau}(\mathbf{F}^{^{-1}})(d\mathbf{F}),
  \end{equation}
  with constant of proportionality
  $$
    \frac{\Gamma^{\beta}_{1}\left[(\nu+mn)\beta/2-k-\sum_{i=1}^{m}t_{i}\right] \rho^{\beta mn/2 -
    \sum_{i=1}^{m}t_{i}}}{ \Gamma^{\beta}_{m}[n\beta/2,-\tau] \Gamma^{\beta}_{1}[\nu\beta/2-k]
    |\mathbf{\Sigma}|^{n\beta/1}q_{\tau}(\mathbf{\Sigma}^{-1})},
  $$
  where $\re([\nu\beta/2)> (m-1)\beta/2+k_{1}$ and $\re(m\beta/2)> (m-1)\beta/2+t_{1}$.
  $\mathbf{F}$ is said to have a \emph{matrix multivariate k-beta-Riesz type II distribution}.
\end{enumerate}
\end{theorem}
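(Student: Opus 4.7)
The plan is to start from the joint density of $(S,\mathbf Y)$ given by independence (Definition \ref{defKR} part (1) with $\boldsymbol\mu=\mathbf 0$ and $\boldsymbol\Theta=\mathbf I_n$, together with Definition \ref{defR} for the univariate Riesz density of $S$), and then perform two changes of variables in sequence: first to the pair $(S,\mathbf W)$ to recover the stated Riesz law of $\mathbf W$, and second to the pair $(S,\mathbf F)$ with $\mathbf F=S^{-1}\mathbf W$, after which $S$ is integrated out.

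For the first change, write $\mathbf Y=\mathbf V_1\mathbf T_0$ with $\mathbf V_1\in\mathcal V_{m,n}^\beta$ and $\mathbf T_0\in\mathfrak T_U^\beta(m)$, so that $\mathbf W=\mathbf Y^*\mathbf Y$. Proposition \ref{lemW} gives $(d\mathbf Y)=2^{-m}|\mathbf W|^{\beta(n-m+1)/2-1}(d\mathbf W)(\mathbf V_1^*d\mathbf V_1)$. The highest weight factor in (\ref{dfEKR1}) simplifies by property (\ref{qk6}) applied with $\mathbf B=\u(\boldsymbol\Sigma)$ (so that $\mathbf C=\boldsymbol\Sigma$), giving $q_\tau[\u(\boldsymbol\Sigma)^{*-1}\mathbf W\u(\boldsymbol\Sigma)^{-1}]=q_\tau(\mathbf W)/q_\tau(\boldsymbol\Sigma)$. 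Integrating $(\mathbf V_1^*d\mathbf V_1)$ over the Stiefel manifold via (\ref{vol}) collapses the Kotz-Riesz normalising constant (the $\Gamma_m^\beta[n\beta/2]$ factors cancel) and produces exactly the Riesz type I density (\ref{dfR1}) with parameters $a=n\beta/2$, weight $\tau$ and scale $\boldsymbol\Sigma$; this proves the Riesz claim for $\mathbf W$.

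For the second change, the joint density of $(S,\mathbf F)$ is obtained from that of $(S,\mathbf W)$ by $\mathbf W=s\mathbf F$. Viewing this as $\mathbf W=\mathbf A\mathbf F\mathbf A^*$ with $\mathbf A=s^{1/2}\mathbf I_m$, Proposition \ref{lemhlt} yields $(d\mathbf W)=s^{m[(m-1)\beta/2+1]}(d\mathbf F)$; in parallel, $|\mathbf W|=s^m|\mathbf F|$ and the homogeneity of $q_\tau$ (a homogeneous polynomial of degree $\sum t_i$, cf.\ the Remark) give $q_\tau(\mathbf W)=s^{\sum t_i}q_\tau(\mathbf F)$, while $\etr\{-\beta\boldsymbol\Sigma^{-1}\mathbf W\}=\exp\{-\beta s\,\tr(\boldsymbol\Sigma^{-1}\mathbf F)\}$. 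Collecting the powers of $s$ — from the Riesz density, the Jacobian, and the homogeneity factor — the exponent telescopes cleanly to $(\nu+mn)\beta/2+k+\sum t_i-1$, and the $s$-marginal becomes the standard univariate integral $\int_0^\infty s^{a-1}e^{-\beta s(1/\rho+\tr(\boldsymbol\Sigma^{-1}\mathbf F))}\,ds$, which evaluates to $\Gamma_1^\beta[a]\rho^a\beta^{-a}(1+\rho\tr(\boldsymbol\Sigma^{-1}\mathbf F))^{-a}$ for $a=(\nu+mn)\beta/2+k+\sum t_i$. This yields (\ref{MMTR1}), and bookkeeping of the $\beta^{\,\cdot}$ and $\rho^{\,\cdot}$ factors recovers the claimed normalising constant.

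Part (2) is entirely parallel, with three bookkeeping changes: use the type II densities (\ref{dfEKR2}) and (\ref{dfR2}); replace $q_\tau(\mathbf W)$ by $q_\tau(\mathbf W^{-1})$ and apply property (\ref{qk2}) together with (\ref{qk6}) so that $q_\tau$ factors as $q_\tau(\mathbf W^{-1})/q_\tau(\boldsymbol\Sigma^{-1})$; and observe that under $\mathbf W=s\mathbf F$ one has $q_\tau(\mathbf W^{-1})=s^{-\sum t_i}q_\tau(\mathbf F^{-1})$, which flips the sign of $\sum t_i$ in the final exponent as required. The main obstacle I anticipate is purely algebraic: reducing the $q_\tau$ factor of (\ref{dfEKR1})/(\ref{dfEKR2}) to a clean product of $q_\tau(\boldsymbol\Sigma)^{\pm 1}$ and $q_\tau(\mathbf W)^{\pm 1}$ via (\ref{qk6}), and then tracking the large number of $s$-exponents (from $|\mathbf W|^{\cdot}$, $q_\tau$, and the Jacobian of Proposition \ref{lemhlt}) so that they combine into exactly $a-1$ before the $s$-integration; once that telescoping is correct, the remainder is routine.
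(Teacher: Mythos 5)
Your argument is correct, but it reverses the order of operations relative to the paper. The paper's proof is a two-line corollary of Theorems \ref{teo4} and \ref{teo41}: it starts from the already-integrated density of $\mathbf{T}$ in (\ref{mmtr1})/(\ref{mmtr2}), applies the decomposition $\mathbf{T}=\mathbf{V}_{1}\mathbf{T}_{0}$ and the Jacobian (\ref{w}) to pass to $\mathbf{F}=\mathbf{T}^{*}\mathbf{T}$, and then integrates over the Stiefel manifold via (\ref{vol}); the variable $S$ never reappears. You instead go back to the joint density of $(S,\mathbf{Y})$, do the Stiefel integration first (obtaining the Riesz law of $\mathbf{W}$), then change variables $\mathbf{W}=s\mathbf{F}$ via Proposition \ref{lemhlt}, and integrate out $s$ last. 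Both routes are valid and your exponent bookkeeping checks out: the powers of $s$ combine to $(\nu+mn)\beta/2\pm(k+\sum t_{i})-1$, and the $\beta$ and $\rho$ powers cancel and accumulate exactly as the stated constant requires (your computation also confirms that the exponent $|\boldsymbol{\Sigma}|^{n\beta/1}$ in the displayed constant is a typo for $|\boldsymbol{\Sigma}|^{n\beta/2}$). What your longer route buys is that it actually proves the assertion $\mathbf{W}=\mathbf{Y}^{*}\mathbf{Y}\sim\mathcal{R}_{m}^{\beta,I}(n\beta/2,\tau,\boldsymbol{\Sigma})$, which is part of the theorem statement but which the paper's proof silently delegates to the cited Kotz--Riesz reference; what the paper's route buys is brevity, since the hard $s$-integration has already been done once in Theorem \ref{teo4}. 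The only step you should make explicit is the reduction of the type II weight factor $q_{\tau}\bigl[\bigl(\u(\boldsymbol{\Sigma})^{*-1}\mathbf{W}\u(\boldsymbol{\Sigma})^{-1}\bigr)^{-1}\bigr]$ to $q_{\tau}(\mathbf{W}^{-1})\,/\,q_{\tau}(\boldsymbol{\Sigma}^{-1})$, which requires combining (\ref{qk2}) with (\ref{qk6}) rather than (\ref{qk6}) alone, since the inverse turns the conjugating triangular factor from $\mathbf{B}^{*-1}(\cdot)\mathbf{B}^{-1}$ into $\mathbf{B}(\cdot)\mathbf{B}^{*}$; this is the same level of detail the paper itself suppresses, so it is a presentational gap, not a mathematical one.
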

\begin{proof}
The desired result follows from (\ref{mmtr1}) and (\ref{mmtr2}) respectively, by applying (\ref{w}) and
then (\ref{vol}).
\end{proof}

If in theorems in this section are defined $k =0$ and $\tau = (0, \dots,0)$, the results in
\citet{dggj:12} are obtained as particular cases. Also, in real case, when $k =0$ and $\tau = (0,
\dots,0)$ the results in Theorem \ref{teo7}.1 contain as particular case the results in \citet[Problem
3.18, p. 118]{m:82}.

\section{Singular value densities}\label{sec4}
In this section, the joint densities of the singular values of matrices $\mathbf{T}$ types I and II are
derived. In addition, and as a direct consequence, the joint densities of the eigenvalues of $\mathbf{F}$
types I and II are obtained for real normed division algebras.

\begin{theorem}
\begin{enumerate}
\item Let $\alpha_{1}, \dots, \alpha_{m}$, $\alpha_{1}> \cdots > \alpha_{m} > 0$, be the singular values of
the random matrix $\mathbf{T} \sim \mathcal{MTR}_{n \times m}^{\beta,I}(\nu, k, \tau, \rho, \mathbf{0},
\mathbf{I}_{n}, \mathbf{I}_{m})$. Then its joint density is
$$
  \propto \prod_{i=1}^{m} \left(\alpha_{i}^{2}\right)^{(n-m+1)\beta/2-1/2}
  \left(1+\rho\sum_{i=1}^{m}\alpha_{i}^{2}\right)^{-\left[(\nu +mn)\beta/2+k+\sum_{i=1}^{m}t_{i}\right]}
  \hspace{1cm}
$$
\begin{equation}\label{svMT1}
\hspace{4cm} \times \ \prod_{i<j}^{m}\left(\alpha_{i}^{2} - \alpha_{j}^{2}\right)^{\beta}
  \frac{C_{\tau}^{\beta}(\mathbf{D}^{2})}{C_{\tau}^{\beta}(\mathbf{I}_{m})} \left(\bigwedge_{i=1}^{m}d\alpha_{i}\right)
\end{equation}
where the constant of proportionality is
$$
  \frac{2^{m} \pi^{\beta m^{2}/2 + \varrho} \ \Gamma^{\beta}_{1}\left[(\nu+mn)\beta/2+k+\sum_{i=1}^{m}t_{i}\right]
  \rho^{\beta mn/2 + \sum_{i=1}^{m}t_{i}}}{\Gamma_{m}^{\beta}[\beta m/2]\Gamma^{\beta}_{m}[n\beta/2,\tau]
  \Gamma^{\beta}_{1}[\nu\beta/2+k]}.
$$
\item Let $\alpha_{1}, \dots, \alpha_{m}$, $\alpha_{1}> \cdots > \alpha_{m} > 0$, be the singular values
of the random matrix $\mathbf{T} \sim \mathcal{MTR}_{n \times m}^{\beta,II}(\nu,k,\tau,\rho,\mathbf{0},
\mathbf{I}_{n}, \mathbf{I}_{m})$. Then its joint density is
$$
  \propto \prod_{i=1}^{m} \left(\alpha_{i}^{2}\right)^{(n-m+1)\beta/2-1/2}
  \left(1+\rho\sum_{i=1}^{m}\alpha_{i}^{2}\right)^{-[(\nu +mn)\beta/2-k-\sum_{i=1}^{m}t_{i}]}
  \hspace{1cm}
$$
\begin{equation}\label{svMT2}
\hspace{4cm} \times \ \prod_{i<j}^{m}\left(\alpha_{i}^{2} - \alpha_{j}^{2}\right)^{\beta}
  \frac{C_{\tau}^{\beta}(\mathbf{D}^{-2})}{C_{\tau}^{\beta}(\mathbf{I}_{m})} \left(\bigwedge_{i=1}^{m}d\alpha_{i}\right)
\end{equation}
where the constant of proportionality is
$$
  \frac{2^{m} \pi^{\beta m^{2}/2 + \varrho} \ \Gamma^{\beta}_{1}\left[(\nu+mn)\beta/2-k-\sum_{i=1}^{m}t_{i}\right]
  \rho^{\beta mn/2 - \sum_{i=1}^{m}t_{i}}}{\Gamma_{m}^{\beta}[\beta m/2]\Gamma^{\beta}_{m}[n\beta/2,-\tau]
  \Gamma^{\beta}_{1}[\nu\beta/2-k]},
$$
\end{enumerate}
Where $\varrho$ is defined in Lemma \ref{lemsvd}, $\mathbf{D} = \diag(\alpha_{1},\dots, \alpha_{m})$, and
$C_{\kappa}^{\beta}(\cdot)$ denotes the zonal spherical functions or spherical polynomials, see
\citet{gr:87} and \citet[Chapter XI, Section 3]{fk:94}.
\end{theorem}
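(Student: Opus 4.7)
The plan is to specialise the densities (\ref{mmtr1}) and (\ref{mmtr2}) to the isotropic case $\boldsymbol{\mu}=\mathbf{0}$, $\mathbf{\Sigma}=\mathbf{I}_{m}$, $\mathbf{\Theta}=\mathbf{I}_{n}$ (so that $\u(\mathbf{\Sigma})=\mathbf{I}_{m}$) and then transform by the singular value decomposition $\mathbf{T}=\mathbf{W}_{1}\mathbf{D}\mathbf{V}^{*}$ of Proposition \ref{lemsvd}. Under this specialisation the density (\ref{mmtr1}) is proportional to
\begin{equation*}
  \bigl[1+\rho\,\tr(\mathbf{T}^{*}\mathbf{T})\bigr]^{-[(\nu+mn)\beta/2+k+\sum t_{i}]}
  \, q_{\tau}(\mathbf{T}^{*}\mathbf{T})(d\mathbf{T}),
\end{equation*}
and since $\mathbf{T}^{*}\mathbf{T}=\mathbf{V}\mathbf{D}^{2}\mathbf{V}^{*}$, both the trace and the highest weight vector depend only on $\mathbf{D}$ and $\mathbf{V}$; in particular the power of $1+\rho\sum\alpha_{i}^{2}$ drops out of the $\mathbf{V}$- and $\mathbf{W}_{1}$-integrals.

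First I would substitute the Jacobian (\ref{svd}) into the density, leaving an expression of the form
\begin{equation*}
  \text{(scalar factor in }\alpha_{i}\text{'s)}\cdot q_{\tau}(\mathbf{V}\mathbf{D}^{2}\mathbf{V}^{*})\,
  (d\mathbf{D})(\mathbf{V}^{*}d\mathbf{V})(\mathbf{W}_{1}^{*}d\mathbf{W}_{1}),
\end{equation*}
then integrate over $\mathbf{W}_{1}\in\mathcal{V}^{\beta}_{m,n}$ using (\ref{vol}) to produce the factor $2^{m}\pi^{mn\beta/2}/\Gamma_{m}^{\beta}[n\beta/2]$. This handles the Stiefel manifold contribution, and $q_{\tau}$ is unaffected because it is a function of $\mathbf{T}^{*}\mathbf{T}$.

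The main step, and the one I expect to be the principal obstacle, is the integration over $\mathbf{V}\in\mathfrak{U}^{\beta}(m)$. Since $q_{\tau}$ lies in the irreducible subspace $\mathcal{P}^{\tau}(\mathfrak{S}_{m}^{\beta})$ (see the Remark after (\ref{qk6})), the group average
\begin{equation*}
  \int_{\mathfrak{U}^{\beta}(m)} q_{\tau}(\mathbf{V}\mathbf{D}^{2}\mathbf{V}^{*})\,(\mathbf{V}^{*}d\mathbf{V})
\end{equation*}
is the projection of $q_{\tau}(\mathbf{D}^{2})$ onto the $\mathfrak{U}^{\beta}(m)$-invariants of $\mathcal{P}^{\tau}(\mathfrak{S}_{m}^{\beta})$, which is one-dimensional and spanned by the zonal spherical function $C_{\tau}^{\beta}$. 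By homogeneity and the normalisation $C_{\tau}^{\beta}(\mathbf{I}_{m})$ this average equals $\Vol(\mathfrak{U}^{\beta}(m))\,C_{\tau}^{\beta}(\mathbf{D}^{2})/C_{\tau}^{\beta}(\mathbf{I}_{m})$, and $\Vol(\mathfrak{U}^{\beta}(m))=2^{m}\pi^{m^{2}\beta/2}/\Gamma_{m}^{\beta}[m\beta/2]$ by (\ref{vol}) with $n=m$. This identity is the essential analytic content; all other ingredients are bookkeeping.

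Finally, I would multiply the scalar factor $\prod\alpha_{i}^{\beta(n-m+1)-1}\prod_{i<j}(\alpha_{i}^{2}-\alpha_{j}^{2})^{\beta}$ coming from (\ref{svd}) by the two volume factors, rewrite $\alpha_{i}^{\beta(n-m+1)-1}=(\alpha_{i}^{2})^{(n-m+1)\beta/2-1/2}$, and absorb the factors $2^{-m}\pi^{\varrho}$, $\Vol(\mathcal{V}^{\beta}_{m,n})$, $\Vol(\mathfrak{U}^{\beta}(m))$ into the constant of proportionality of (\ref{mmtr1}); the $\pi^{mn\beta/2}$ cancels against the one in that constant, leaving exactly $2^{m}\pi^{\beta m^{2}/2+\varrho}\Gamma_{1}^{\beta}[(\nu+mn)\beta/2+k+\sum t_{i}]\rho^{\beta mn/2+\sum t_{i}}/(\Gamma_{m}^{\beta}[\beta m/2]\Gamma_{m}^{\beta}[n\beta/2,\tau]\Gamma_{1}^{\beta}[\nu\beta/2+k])$, which matches the stated constant. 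Part 2 is identical, with $q_{\tau}(\cdot^{-1})$ in place of $q_{\tau}(\cdot)$, with $(\mathbf{V}\mathbf{D}^{2}\mathbf{V}^{*})^{-1}=\mathbf{V}\mathbf{D}^{-2}\mathbf{V}^{*}$, producing $C_{\tau}^{\beta}(\mathbf{D}^{-2})$ by the same group-average argument, and with the type II signs in the exponents coming directly from (\ref{mmtr2}).
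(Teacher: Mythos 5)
Your proposal is correct and follows essentially the same route as the paper: specialise (\ref{mmtr1}) and (\ref{mmtr2}), apply the SVD Jacobian (\ref{svd}), integrate out the Stiefel factor via (\ref{vol}), and evaluate the $\mathfrak{U}^{\beta}(m)$-average of $q_{\tau}$ by the zonal spherical function identity $C_{\tau}^{\beta}(\mathbf{L}) = C_{\tau}^{\beta}(\mathbf{I}_{m})\int_{\mathfrak{U}^{\beta}(m)} q_{\tau}(\mathbf{HLH}^{*})(d\mathbf{H})$, which is exactly the Gross--Richards/Faraut--Kor\'anyi fact the paper invokes. Your constant-tracking also reproduces the stated normalisation, so nothing is missing.
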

\begin{proof}
This follows immediately from (\ref{mmtr1}) and (\ref{mmtr2}) respectively, first using (\ref{svd}), then
applying (\ref{vol}) and observing that, from \citep[Equation 4.8(2) and Definition 5.3]{gr:87} and
\citet[Chapter XI, Section 3]{fk:94}, we have that for $\mathbf{L} \in \mathfrak{P}_{m}^{\beta}$,
$$
    C_{\tau}^{\beta}(\mathbf{L}) = C_{\tau}^{\beta}(\mathbf{I}_{m})\int_{\mathbf{H} \in \mathfrak{U}^{\beta}(m)}
     q_{\kappa}(\mathbf{HLH}^{*})(d\mathbf{H}),
$$
\end{proof}

Finally, observe that $\alpha_{i} = \sqrt{\eig_{i}(\mathbf{T}\mathbf{T}^{*})}$, where
$\eig_{i}(\mathbf{A})$, $i = 1, \dots, m$, denotes the $i$-th eigenvalue of $\mathbf{A}$. Let $\gamma_{i}
= \eig_{i}(\mathbf{T}\mathbf{T}^{*}) = \eig_{i}(\mathbf{F})$, observing that, for example, $\alpha_{i} =
\sqrt{\gamma_{i}}$. Then
$$
  \bigwedge_{i=1}^{m} d\alpha_{i} =  2^{-m} \prod_{i=1}^{m}
  \gamma_{i}^{-1/2} \bigwedge_{i=1}^{m} d\gamma_{i},
$$
the corresponding joint densities of $\gamma_{1}, \dots, \gamma_{m}$, $\gamma_{1} > \cdots > \gamma_{m}>
0$ types I and II, are obtained from (\ref{svMT1}) and (\ref{svMT1}) respectively as
\begin{enumerate}
\item
$$
  \propto \prod_{i=1}^{m} \gamma_{i}^{(n-m+1)\beta/2-1/2}
  \left(1+\rho\sum_{i=1}^{m}\gamma_{i}\right)^{-\left[(\nu +mn)\beta/2+k+\sum_{i=1}^{m}t_{i}\right]}
  \hspace{1cm}
$$
$$
\hspace{4cm} \times \ \prod_{i<j}^{m}\left(\gamma_{i} - \gamma_{j}\right)^{\beta}
  \frac{C_{\tau}^{\beta}(\mathbf{G})}{C_{\tau}^{\beta}(\mathbf{I}_{m})} \left(\bigwedge_{i=1}^{m}d\alpha_{i}\right)
$$
where the constant of proportionality is
$$
  \frac{\pi^{\beta m^{2}/2 + \varrho} \ \Gamma^{\beta}_{1}\left[(\nu+mn)\beta/2+k+\sum_{i=1}^{m}t_{i}\right]
  \rho^{\beta mn/2 + \sum_{i=1}^{m}t_{i}}}{\Gamma_{m}^{\beta}[\beta m/2]\Gamma^{\beta}_{m}[n\beta/2,\tau]
  \Gamma^{\beta}_{1}[\nu\beta/2+k]}.
$$
\item
$$
  \propto \prod_{i=1}^{m} \gamma_{i}^{(n-m+1)\beta/2-1/2}
  \left(1+\rho\sum_{i=1}^{m}\gamma_{i}\right)^{-[(\nu +mn)\beta/2-k-\sum_{i=1}^{m}t_{i}]}
  \hspace{1cm}
$$
$$
\hspace{4cm} \times \ \prod_{i<j}^{m}\left(\gamma_{i} - \gamma_{j}\right)^{\beta}
  \frac{C_{\tau}^{\beta}(\mathbf{G}^{-1})}{C_{\tau}^{\beta}(\mathbf{I}_{m})} \left(\bigwedge_{i=1}^{m}d\alpha_{i}\right)
$$
where the constant of proportionality is
$$
  \frac{2^{m} \pi^{\beta m^{2}/2 + \varrho} \ \Gamma^{\beta}_{1}\left[(\nu+mn)\beta/2-k-\sum_{i=1}^{m}t_{i}\right]
  \rho^{\beta mn/2 - \sum_{i=1}^{m}t_{i}}}{\Gamma_{m}^{\beta}[\beta m/2]\Gamma^{\beta}_{m}[n\beta/2,-\tau]
  \Gamma^{\beta}_{1}[\nu\beta/2-k]},
$$
\end{enumerate}
where $\mathbf{G} = \diag(\gamma_{1}, \dots,\gamma_{m})$.

\section{Conclusions}

Although during the 90's and 2000's were obtained important results in theory of random matrices
distributions, the  past 30 years have reached a substantial development. Essentially, these advances
have been archived through two approaches based on the \emph{theory of Jordan algebras} and the \emph{
theory of real normed division algebras}. A basic source of the mathematical tools of theory of random
matrices distributions under Jordan algebras can be found in \citet{fk:94}; and specifically, some works
in the context of theory of random matrices distributions based on Jordan algebras are provided in
\citet{m:94}, \citet{cl:96}, \citet{hl:01}, \citet{hlz:05}, \citet{hlz:08} and \citet{k:14} and the
references therein. Parallel results on theory of random matrices distributions based on real normed
division algebras have been also developed in random matrix theory and statistics, see \citet{gr:87},
\citet{f:05}, \citet{dggj:11}, \citet{dggj:13}, among others. In  addition, from mathematical point of
view, several basic properties of the matrix multivariate Riesz distribution under \emph{the structure
theory of normal $j$-algebras}  and under \emph{theory of Vinberg algebras} in place of Jordan algebras
have been studied, see \citet{i:00} and \citet{bh:09}, respectively.

The interest in these generalisations from a theoretical point of view becomes imminent, but from the
practical point of view, we most keep in mind the fact from \citet{b:02}, \emph{there is still no proof
that the octonions are useful for understanding the real world}. We can only hope that eventually this
question will be settled on one way or another. Also, for the sake of completeness, in the present
article the case of octonions is considered, but the veracity of the results obtained for this case can
only be conjectured; since there are still many problems under study in the context of the octonions.

For the sake of completeness, in the present article the case of octonions is considered, but the
veracity of the results obtained for this case can only be conjectured. Nonetheless, \citet[Section
1.4.5, pp. 22-24]{f:05} it is proved that the bi-dimensional density function of the eigenvalue, for a
Gaussian ensemble of a $2 \times 2$ octonionic matrix, is obtained from the general joint density
function of the eigenvalues for the Gaussian ensemble, assuming $m = 2$ and $\beta = 8$, see Section
\ref{sec2}. Moreover, as is established in \citet{fk:94} and \citet{S:97} the result obtained in this
article are valid for the \emph{algebra of Albert}, that is when hermitian matrices ($\mathbf{S}$) or
hermitian product of matrices ($\mathbf{X}^{*}\mathbf{X}$) are $3 \times 3$ octonionic matrices.

Finally, note that if in sections \ref{sec3} and \ref{sec4} is defined $\tau = (p,\dots,p)$ the
corresponding results for the matrix multivariate Kotz type distribution are obtained as particular case,
see \citet{fl:99}.

\section*{Acknowledgements}
%The authors wish to thank the Editor and the anonymous reviewers for their constructive
%comments on the preliminary version of this paper.
This research work was partially supported by IDI-Spain, Grants No. MTM2011-28962. This paper was written
during J. A. D\'{\i}az-Garc\'{\i}a's stay as a visiting professor at the Department of Statistics and O.
R. of the University of Granada, Spain.

\bibliographystyle{plain}

\begin{thebibliography}{}

\bibitem[Baez(2002)]{b:02}
    Baez, J. C. (2002).
    The octonions,
    \emph{Bull. Amer. Math. Soc.} 39, 145--205.

\bibitem[Boutouria and Hassiri(2009)]{bh:09}
  Boutouria. I., and  Hassiri. A. (2009).
  Riesz exponential families on homogeneous cones,
  \texttt{http://arxiv.org/abs/0906.1892}. Also submitted.

\bibitem[Casalis, and Letac(1996)]{cl:96}
  Casalis, M., and  Letac, G. (1996).
  The Lukascs-Olkin-Rubin characterization of Wishart distributions on symmetric cones,
  \textit{Ann. Statist.} 24, 768--786.

\bibitem[D\'{\i}az-Garc\'{\i}a (2015a)]{dg:15a}
  D\'{\i}az-Garc\'{\i}a, J. A. (2015a).
  Distributions on symmetric cones I: Riesz distribution.
  http://arxiv.org/abs/1211.1746v2.

\bibitem[D\'{\i}az-Garc\'{\i}a(2015b)]{dg:15b}
    D\'{\i}az-Garc\'{\i}a, J. A. (2015b).
    Distributions on symmetric cones II: Beta-Riesz distributions,
    Cornell University Library, \texttt{http://arxiv.org/abs/1301.4525v2}.

\bibitem[D\'{\i}az-Garc\'{\i}a(2015c)]{dg:15c}
    D\'{\i}az-Garc\'{\i}a, J. A. (2015c).
    A generalised Kotz type distribution and Riesz distribution,
    Cornell University Library, \texttt{http://arxiv.org/abs/1304.5292v2}.

\bibitem[D\'{\i}az-Garc\'{\i}a and  Guti\'errez-J\'aimez(2011)]{dggj:11}
    D\'{\i}az-Garc\'{\i}a, J. A. and Guti\'errez-J\'aimez, R. (2011).
    On Wishart distribution: Some extensions,
    \emph{Linear Algebra Appl.}  435, 1296-1310.

\bibitem[D\'{\i}az-Garc\'{\i}a and  Guti\'errez-J\'aimez(2012)]{dggj:12}
    D\'{\i}az-Garc\'{\i}a, J. A. and Guti\'errez-J\'aimez, R. (2012).
    Matricvariate and matrix multivariate T distributions and associated distributions,
    \emph{Metrika}, 75(7), 963-976.

\bibitem[D\'{\i}az-Garc\'{\i}a and Guti\'errez-J\'aimez(2013)]{dggj:13}
  D\'{\i}az-Garc\'{\i}a, J. A.\ \textup{and} Guti\'errez-J\'aimez, R. (2013).
  Spherical ensembles,
  \emph{Linear Algebra Appl.}, \textbf{438}, 3174 -- 3201.

\bibitem[Edelman and Rao(2005)]{er:05}
    Edelman, A. and Rao, R. R. (2005).
    Random matrix theory,
    \emph{Acta Numerica}
    14, 233--297.

\bibitem[Fang and Li(1999)]{fl:99}
    Fang, K. T. and Li, R. (1999).
    Bayesian statistical inference on elliptical matrix distributions,
    \emph{J. Multivariate Anal}. 70, 66-85.

\bibitem[Fang and Zhang(1990)]{fz:90}
    Fang, K. T. and Zhang, Y. T. (1990).
    Generalized Multivariate Analysis,
    Science Press, Beijing, Springer-Verlang.

\bibitem[Faraut and Kor\'anyi(1994)]{fk:94}
    Faraut, J. and Kor\'anyi, A. (1994).
    Analysis on symmetric cones,
    Oxford Mathematical Monographs,
    Clarendon Press, Oxford.

\bibitem[Forrester(2005)]{f:05}
  Forrester, P. J. (2005).
  \textit{Log-gases and random matrices},
  \verb"http://www.ms.unimelb.edu.au/~matpjf/matpjf.html",
  to appear.

\bibitem[Gross and Richards (1987)]{gr:87}
    Gross, K. I. and Richards, D. St. P. (1987).
    Special functions of matrix argument I: Algebraic induction zonal polynomials and hypergeometric
    functions,
    \emph{Trans. Amer. Math. Soc}. 301(2), 475--501.

\bibitem[Gupta and Varga(1993)]{gv:93}
    Gupta, A. K. and Varga, T. (1993).
    Elliptically Contoured Models in Statistics,
    Kluwer Academic Publishers, Dordrecht.

\bibitem[Hassairi and Lajmi(2001)]{hl:01}
   Hassairi, A. and Lajmi, S. (2001).
   Riesz exponential families on symmetric cones,
   \emph{J. Theoret. Probab.} 14, 927--948.

\bibitem[Hassairi \textit{et al.}(2005)]{hlz:05}
  Hassairi, A., Lajmi, S. and Zine, R. (2005).
  Beta-Riesz distributions on symmetric cones,
  J. Satatist. Plan. Inference, 133, 387 -- 404.

\bibitem[Hassairi \textit{et al.}(2008)]{hlz:08}
   Hassairi, A., Lajmi, S. and Zine, R. (2008).
   A chacterization of the Riesz probability distribution,
   \emph{J. Theoret. Probab.} 21, 773-–790.

\bibitem[Ishi(2000)]{i:00}
  Ishi, H. (2000).
  Positive Riesz distributions on homogeneous cones,
  \textit{J. Math. Soc. Japan}, 52, 1, 161 -- 186.

\bibitem[Kabe(1984)]{k:84}
    Kabe, D. G. (1984).
    Classical statistical analysis based on a certain hypercomplex multivariate
    normal distribution,
    \emph{Metrika} 31, 63--76.

\bibitem[Khatri(1966)]{k:66}
    Khatri, C. G. (1966).
    On certain distribution problems based on positive definite quadratic
     functions in normal vector,
    \emph{Ann. Math. Statist. A}, 37, 468-479.

\bibitem[Ko{\l}odziejek(2014)]{k:14}
   Ko{\l}odziejek, B. (2014).
   The Lukacs-Olkin-Rubin theorem on symmetric cones without invariance of the ``Quotient",
   \emph{J. Theoret. Probab.} DOI 10.1007/s10959-014-0587-3.

\bibitem[Massam(1994)]{m:94}
  Massam, H. (1994).
  An exact decomposition theorem and unified view of some related distributions for a class
  of exponential transformation models on symmetric cones,
  \textit{Ann. Statist.} 22, 1, 369--394.

\bibitem[Muirhead(1982)]{m:82}
    Muirhead, R. J. (1982).
    Aspects of Multivariate Statistical Theory,
    John Wiley \& Sons, New York.

\bibitem[Neukirch \textit{et al.} (1990)]{E:90}
    Neukirch, J., Prestel, A. and Remmert, R. (1990).
    Numbers,
    GTM/RIM 123, H.L.S. Orde, tr. NWUuser.

\bibitem[Press(1982)]{p:82}
    Press, S. J. (1982).
    Applied Multivariate Analysis: Using Bayesian and Frequentist Methods of Inference,
    Second Edition, Robert E. Krieger
    Publishing Company, Malabar, Florida.

\bibitem[Sawyer(1997)]{S:97}
  Sawyer, P. (1997).
  Spherical Functions on Symmetric Cones,
  \textit{Trans. Amer. Math. Soc.} 349, 3569 -- 3584.
\end{thebibliography}

\end{document}